\def\version{29.8.2017 - revised 5.4.2018}\def\users{}  %
\def\users{final-layout}  
\documentclass[12pt]{article}
\parskip.3em


\usepackage{graphicx}
\usepackage{graphics}
\usepackage{amsfonts}
\usepackage{amssymb}
\usepackage{amsmath}
\usepackage{amsthm}
\usepackage{mathrsfs}
\usepackage{color}
\usepackage{psfrag}     

\usepackage{epsfig}
\usepackage{psfrag}
\usepackage{graphicx}
\usepackage{textcomp}
\usepackage{pgf}   
\usepackage{upgreek} 
\usepackage{mathrsfs,cite}



\newtheorem{theorem}{Theorem}[section]

\newtheorem{lemma}[theorem]{Lemma}
\newtheorem{proposition}[theorem]{Proposition}

\newtheorem{definition}[theorem]{Definition}
\newtheorem{remark}[theorem]{Remark}

\numberwithin{equation}{section}

\usepackage{ifthen}
\ifthenelse{\equal{\users}{final-layout}}{}{
\usepackage{fancyhdr}
\pagestyle{fancy}
\headheight=28pt\headwidth=16.5cm
\definecolor{gray}{gray}{0.5}
\rhead{\color{green}Thermoelastoplasticity at large strains\\
T.Roub\'\i\v cek \& U.Stefanelli}
\chead{}
\lhead{\color{green}Version \version, file: \jobname.tex\\
compiled:
\number\day.\number\month.\number\year\ at
\the\hour:\ifnum\minute<10 0\fi\the\minute\ h }
}

\newcount\hour \newcount\minute
\hour=\time
\divide \hour by 60
\minute=\time
\loop \ifnum \minute > 59 \advance \minute by -60 \repeat

\usepackage[normalem]{ulem}
\definecolor{brown}{rgb}{0.5,0,0}
\ifthenelse{\equal{\users}{final-layout}}{
    \newcommand{\REPLACE}[2]{#2}
    
    \newcommand{\DELETE}[1]{}
    
    \newcommand{\COMMENT}[1]{}
    
    \newcommand{\REM}[1]{\marginpar{\bfseries\tiny{\color{blue}}}}
    \newcommand{\TODO}[1]{}

\newcommand{\UUU}{\color{black}}

\newcommand{\EEE}{\color{black}}
}
{\usepackage[notcite,notref,color]{showkeys}
\definecolor{labelkey}{rgb}{1.,.2,0.}

\newcommand{\REPLACE}[2]{{\color{brown}\sout{#1}\uline{#2}\color{black}}}

 \newcommand{\DELETE}[1]{{\color{brown}\sout{#1}\color{black}}}
 
 \newcommand{\COMMENT}[1]{{\color{red}\uuline{#1}\color{black}}}
 
 \newcommand{\REM}[1]{\marginpar{\bfseries\tiny{\color{blue}#1}}}
 \newcommand{\TODO}[1]{{$^{\color{blue}{TODO:}}$\footnote{\color{blue}{#1}}}}  

\newcommand{\UUU}{\color{blue}}

\newcommand{\EEE}{\color{black}}
}

\marginparwidth=1.7cm

\newcommand{\R}{\mathbb{R}}

\newcommand{\bbI}{\mathbb{I}}
\newcommand\DT[1]{\mathchoice
                 {{\buildrel{\hspace*{.1em}\text{\LARGE.}}\over{#1}}}
                 {{\buildrel{\hspace*{.1em}\text{\Large.}}\over{#1}}}
                 {{\buildrel{\hspace*{.1em}\text{\large.}}\over{#1}}}
                 {{\buildrel{\hspace*{.1em}\text{\large.}}\over{#1}}}}
\newcommand\DDT[1]{\mathchoice
   {{\buildrel{\hspace*{.13em}\text{\LARGE.\hspace*{-.13em}.}}\over{#1}}}
   {{\buildrel{\hspace*{.1em}\text{\Large.\hspace*{-.1em}.}}\over{#1}}}
   {{\buildrel{\hspace*{.1em}\text{\large.\hspace*{-.1em}.}}\over{#1}}}
   {{\buildrel{\hspace*{.1em}\text{\large.\hspace*{-.1em}.}}\over{#1}}}}

\newcommand{\linesunder}[3]{\LSU{\begin{array}[t]{c}\underbrace{#1}\vspace*{.5em}\end{array}}{\mbox{\footnotesize\rm #2}}{\mbox{\footnotesize\rm#3}}}

\newcommand{\LSU}[3]{\begin{array}[t]{c}#1\vspace*{-1em}\\_{#2}\vspace*{-.3em}\\_{#3}\end{array}}
\newcommand{\Vdots}{\vdots}
\renewcommand{\d}{{\rm d}}

\newcommand\Frakk{\text{\large$\mathbb{K}$}}

\newcommand\FT{\psi_{_{\rm T}}}
\newcommand\FE{\psi_{_{\rm E}}}
\newcommand\FH{\psi_{_{\rm H}}}
\newcommand\widehatFM{\widehat\psi_{_{\rm M}}}
\newcommand\PsiT{\Psi_{_{\rm T}}}
\newcommand\PsiM{\Psi_{_{\rm M}}}
\newcommand\PP{P}
\newcommand\PR{R}
\newcommand\Cof{\mathrm{Cof}}

\newcommand{\divS}{\mathrm{div}_{\scriptscriptstyle\textrm{\hspace*{-.1em}S}}^{}}



\setlength{\textheight}{243mm}
 \setlength{\textwidth}{165mm}
  \setlength{\oddsidemargin}{-.5cm}
   \setlength{\topmargin}{-2cm}

\begin{document}


\vspace*{2em}

\noindent{\LARGE\bf
Finite thermoelastoplasticity and creep
\\
under small elastic strains}

\bigskip\bigskip

\noindent{\large\sc Tom\'{a}\v{s} Roub\'\i\v{c}ek}\\
{\it Mathematical Institute, Charles University, \\Sokolovsk\'a 83,
CZ--186~75~Praha~8,  Czech Republic
}\\and\\
{\it Institute of Thermomechanics, Czech Academy of Sciences,\\Dolej\v skova~5,
CZ--182~08 Praha 8,  Czech Republic
}

\bigskip

\noindent{\large\sc Ulisse Stefanelli}\\
{\it Faculty of Mathematics, University of
    Vienna, \\Oskar-Morgenstern-Platz 1, A-1090 Vienna, Austria}\\ and \\
{\it Istituto di Matematica Applicata e Tecnologie Informatiche
  E. Magenes - CNR, \\v. Ferrata 1, I-27100 Pavia, Italy}


\begin{center}\begin{minipage}[t]{16.5cm}

{\small \noindent {\it Abstract.} \baselineskip=12pt
A mathematical model for an elastoplastic continuum subject to large 
strains is presented. The inelastic response is modeled within 
the frame of rate-dependent gradient plasticity for nonsimple materials. 
Heat diffuses through the continuum by the Fourier law in the actual 
deformed configuration. Inertia makes the nonlinear problem
hyperbolic. \UUU The modelling assumption of small elastic Green-Lagrange 
strains \EEE is combined in a \UUU thermodynamically \EEE consistent way with 
the possibly large displacements and large plastic strain. The model is 
amenable to a rigorous mathematical analysis. \UUU The existence \EEE
of suitably defined weak solutions and a convergence result for 
Galerkin approximations is proved.
\medskip

\noindent {\it Key Words.} 
Thermoplastic materials, finite strains, 
creep, Maxwell viscoelastic rheology, heat transport, Lagrangian 
description, 
energy conservation, frame indifference, Galerkin approximation, 
convergence, weak solutions.

\medskip

\noindent {\it AMS Subject Classification:} 
35Q74, 
35Q79, 
65M60 
74A15, 
74A30, 
74C15, 
74J30, 
80A20. 
}
\end{minipage}
\end{center}

\vspace{1.5em}

\section{Introduction} 
  
{\it Thermoelastodynamic problems} in combination with
inelastic effects  are ubiquitous in
applications and have triggered an intense research activity cutting
across material science, engineering, and mathematics \cite{JirBaz02IAS,Maug92TPF,SimHug98CI}. A number of
rigorous mathematical results are available in case of small strains,
whereas the literature for finite-strain problems is relatively less
developed. Still, large strains arise naturally in a variety of
different thermomechanical contexts including, for instance, plastic
deformations, rolling, and
impacts, and in a number of different materials, from metals to polymers.


The focus of this note is to contribute a finite-strain
thermomechanical model for an inelastic continuum.  The evolution of
the medium will be described by its deformation $y$ from the reference
configuration, its plastic strain $P$, and its absolute temperature
$\theta$. We consider a fully dynamic problem and address  viscoelastic rheologies of {\it Maxwell}
type,  including  {\it creep} \cite{PleKor02EIMT,RoyRed14CMPC} or
({\it visco}){\it plasticity} \cite{MiRoSa??GERV}. \UUU These are both
permanent deformation dynamics. Plasticity is an activated effect, for
its onset corresponds to the reaching of a given yield stress. On the
contrary, one usually
speaks of creep in case 
material relazation is  always 
active upon loading, without any yield threshold
\cite{BJnew}. In this paper, creep is modeled by the Maxwell
viscoelastic rheology; note however that different rheologies are sometimes
associated in the literature with creep as well.
\EEE
Within the classical frame of Generalized Standard Materials 
\cite{Germain,Halphen}, the model results
from the combination of energy and momentum conservation with the
plastic flow rule and will be checked to be thermodynamically consistent.

Our crucial modeling tenet is that
the large deformations are stored by the plastic strain $P$, whereas
the {\it elastic part} of the total deformation strain remains close
to the identity. This assumptions is often not restrictive, as elastoplastic
materials often sustain relatively small elastic deformations before
plasticizing. This is for instance the case of ordinary metals, which
usually plasticize around a few strain percents, as well as of 
geophysical applications,  where soils and rocks sustain small
elastic deformations before sliding and cracking  \cite{LyHaBZ11NLVE}.
On the other hand, such smallness assumption allows for a satisfactory
mathematical treatment in terms  existence and approximability of
solutions. 

Note that we explicitly include in the model nonlocal effects in
terms of higher order gradients of $\nabla y$ and $P$. This sets our
model within the frame of {\it 2nd-grade
  nonsimple} \cite{Toup62EMCS}  and  {\it gradient
  plastic} materials \cite{Aifantis91}. \REPLACE{On the other hand, we consider
{\it perfect isochoric} plastic dynamics. In particular, no hardening
is assumed.
 
This simplifies the notation and widens the applicability of
the model  at the expense of making the analysis more
delicate. Note that including hardening would require just notational
changes.}{We 
 allow for no 
hardening in the shear deformation, which 
is a typical attribute in particular of creep, i.e.\ the
 viscoelastic  rheology of Maxwell type.}

The elastoplastic evolution of the medium is combined with heat production 
and thus heat transfer. A  weak thermal coupling through
the   temperature-dependence of the dissipation potential 
is considered. This models the possible  temperature dependence of the plastic yield 
stress or the viscous moduli responsible for creep.  An important 
feature of the model  is that 
heat convection governed by the {\it Fourier law}  \UUU occurs \EEE  {\it 
in the actual deformed configuration}.  This is  in most applications much 
more physical than  considering heat conduction  in the
material reference configuration,  especially at large
strains. Here again the smallness assumption on the elastic strain
plays a crucial role in avoiding  
analytical technicalities related with the control of the inverse of 
deformation gradient, cf.\   \eqref{M-pull-back} vs. 
\eqref{M-pull-back+} below.

Existence results in finite-strain elastoplasticity are at the moment
restricted to the isothermal case. In the incremental setting, the early 
result in \cite{Miel03EFME} has
been extended in \cite{Mie-Mull06} by including a term in ${\rm curl}\,P$ 
in the energy, see also \cite{compos} for the case of compatible
plastic deformations. In the quasistatic case, the existence of {\it
  energetic solutions} \cite{MieRou15RIST} has been proved in
\cite{MaiMie09GERI} in the frame of gradient plasticity, see also
\cite{cplas_part2,Miel02FELG,Miel04NAEP}. A
discussion on a possible finite-element approximation is in
\cite{MieRou16RIEF} and rigorous linearization results are in \cite{Giacomini13,MieSte13LPEG}.
Viscoplasticity is addressed in \cite{MiRoSa??GERV} instead.

The novelty of this paper is that of dealing with the {\it nonisothermal}
and {\it dynamic}
case. To the best of our knowledge, the analysis of {\it both} these
features is unprecedented in the frame of finite
elastoplasticity. Our main result, Theorem \ref{thm}, states the existence of suitably
weak solutions. This relies on the  smallness assumption on the
elastic strain, which in turn allows us to tame the nonlinear nature
of the coupling of thermal and mechanical effects.
The existence proof is based on a
regularization and Galerkin approximation procedure, which could serve
as basis for numerical investigation.

The plan of the paper is as follows.  In
Section~\ref{sec-model} we formulate the model. In particular,
we specify the form of the total energy and of the dissipation. 
This brings to the formulation of an evolution system of partial differential 
equations and inclusions. The \UUU thermodynamical \EEE consistency of the model
 and various possible modifications are also discussed.  Section~\ref{sec-anal}
presents a variational notion of solution as well as the main
analytical statement. The existence proof is then detailed   
in Section~\ref{sec-proof}.


\section{The model and its thermodynamics}\label{sec-model}
We devote this section to \UUU presenting \EEE our  model
for elastoplastic continua with   heat transfer. This
is formulated in Lagrangian  coordinates with $\Omega\subset\R^d$ ($d=2$ or  
$ 3$)
being a bounded smooth reference (fixed)  configuration. 
 The  variables of the model are
\begin{align*}
y&:\Omega \to \R^d \quad&&\text{deformation,}\\ 
\PP&:\Omega \to 
{\rm GL}^+(d):=\{P\in\R^{d\times d};\ \det P>0\}
\quad&&\text{plastic part of the inelastic
strain,}\\
\theta&:\Omega \to (0,\infty)\quad&&\text{absolute 
temperature. } 
\end{align*}
%

The model will result by combining momentum and energy
conservation with the dynamics of internal variables. In order to
specify the latter and provide constitutive relations, we introduce a
free energy and a dissipation (pseudo)potential in the following subsections.

We consider the standard multiplicative decomposition 
\cite{Kron60AKVE,LeeLiu67FSEP} 
\begin{align}\label{split}
F=F_{\rm el}\PP \ \ \text{ with }F=\nabla y. 
\end{align}
Our basic modelling assumption is that the {\it elastic} part of the
{\it strain is small} in the sense 
\UUU 
that 
\EEE the elastic Green-Lagrange strain 
$E_{\rm el}:=\frac12(F_{\rm el}^\top F_{\rm el}-\mathbb I)$ is
small \UUU is small as well. \EEE  Here,  the 
superscript $\top$ stands for transposition and $\mathbb{I}$ is the identity 
matrix. \UUU Note that $E_{\rm el}$ is sometimes 
called {\it Green-St.\,Venant} strain as well.
In fact, the smallness of $E_{\rm el}$ is equivalent to $F^\top F\sim\PP^\top\PP$
and is weaker than $F\sim\PP$, since it allows large elastic
rotations, a circumstance which may be relevant   in many applications.
\UUU Such a smallness \EEE assumption is well-fitted to the case of
metals  or rocks, where comparably small elastic strains already 
activate the 
 plastification, or to polymers or soils undergoing considerable
 deformation through creep. \UUU Moreover, the \EEE smallness assumption 
allows us to simplify the mathematical treatment of the model. 

Most notably, 
the {\it elastic energy density} $\FE(F_{\rm el})$ of the medium can be assumed 
to have a controlled polynomial growth, see \eqref{ass-FM} below. In addition,
we assume   $\FE$ to be   {\it frame-indifferent}, namely 
\begin{align}\label{frame-indif} 
\forall Q\in{\rm SO}(d):\qquad
\FE(QF_{\rm el})=\FE(F_{\rm el}).
\end{align} 
 Here, we used the notation  ${\rm SO}(d)$ for the matrix 
special-orthogonal group  ${\rm SO}(d):=\{
Q\in
{\rm GL}^+(d);\ 
QQ^\top\!=Q^\top\! Q=\mathbb{I}
\}$. 
Equivalently, $\FE(F_{\rm el})$ can be expressed as a function of the 
elastic Cauchy-Green tensor \UUU $F_{\rm el}^\top F_{\rm el}$ \EEE only. 

The multiplicative decomposition \eqref{split} allows \UUU us \EEE to express
the free energy in terms of the total strain tensor  $\nabla y $ 
and  the plastic strain $\PP$  via the substitution 
\REPLACE{$F_{\rm el}=F\PP^{-1}=F\Cof\PP$, where we
have used the classical definition for the {\it cofactor matrix}
$\Cof\PP :=(\det\PP)\PP^{-\top}$ together with  the assumption  
$\det\PP=1$. The latter is referred to as an {\it isochoric ansatz}.}{$F_{\rm el}=F\PP^{-1}$.}  \REPLACE{It 
expresses that }{In most materials}, processes  \UUU such \EEE as plastification 
or creep lead 
dominantly to 
shear deformation but not to volumetric variation.  As such, the
isochoric constraint $\det \PP \sim 1$ is often considered. \UUU 
This is taken into account by this model, where values of $\det\PP$ to
be close to $1$ are energetically favored by a specific hardening-like
term, \EEE 
cf.\ Remark~\ref{ref-isochoric} below. \UUU 
Such hardening term (denoted by $\FH$ in \eqref{free-energy+} below)
may control \EEE the full plastic 
strain $\PP$, which  \UUU 
would correspond to the
case of coventional kinematic \EEE hardening 
\UUU in the shear plastic strain \EEE
(which is a relevant model typically 
in metals). In any case, \REPLACE{$\FH$}{the stored energy} will control 
$\det\PP$ to be positive, which 
is important to guarantee \UUU the \EEE (uniform) invertibility of $\PP$ needed in the 
mentioned expression $F_{\rm el}=F\PP^{-1}$. 

The mechanical stored energy $\PsiM$ will have 
elastic and hardening parts $\FE$ and $\FH$ and  will be augmented by 
gradient terms 
and a thermal contribution $\FT$ (considered for simplicity to depend solely on
temperature, i.e., in particular thermal expansion is here neglected), cf.\ 
Remark~\ref{rem-cv-general} for some extension. By integrating on the 
reference configuration $\Omega$, the {\it free energy} 
of the body is expressed by   
\begin{align}\nonumber
&\Psi(\nabla y,\PP,\theta)
=\PsiM(\nabla y,\PP)+\PsiT(\theta)
\ \ \ \text{ with }\ \PsiT(\theta)=\int_\Omega\!\FT(\theta)\,\d x
\\
&\qquad\quad\ \text{ and }\ \ \PsiM(\nabla y,\PP)
=\int_\Omega\FE(\nabla y\,\PP^{-1}
)
+\FH(\PP)
+\frac12\kappa_0\big|\nabla^2y\big|^2
+\frac1q\kappa_1|\nabla \PP|^q\, \d x. 
\label{free-energy+}
\end{align} 
In particular,
the $\kappa_0$-term qualifies the material as {\it 2nd-grade
  nonsimple}, also called {\it multipolar} or {\it complex}, see the
seminal  \cite{Toup62EMCS}  and
\cite{FriGur06TBBC,MinEsh68FSTL,Podi02CISM,PoGiVi10HHCS,Silh88PTNB,TriAif86GALD}. 
On the other hand, the $\kappa_1$-term describes nonlocal plastic
effects and is inspired to the by-now classical {\it gradient
  plasticity} theory \cite{Fleck1,Fleck2,Aifantis91}. In particular,
its occurrence turns out to be crucial in order to prevent the formation
of plastic microstructures and ultimately ensures the necessary
compactness for the analysis. Note that in the finite-plasticity context, the introduction of suitable regularizing
terms on the plastic variables
seems at the moment unavoidable
\cite{Mie-Mull06,MaiMie09GERI,cplas_part2}. 
The exponent $q$ in the $\kappa_1$-term  is given and fixed to be
larger than $d$,  which eases  some points of the analysis. Note however that the
choice $\kappa_1(\nabla \PP) |\nabla \PP|^2$ for some $ \kappa_1 (\nabla
\PP)\sim 1+ |\nabla \PP|^{q-2}$ could be considered as well. 

The {\it frame-indifference} of the  mechanical stored  energy \eqref{frame-indif} 
translates in terms of $\Psi$ as
\begin{align*}
\forall Q\in{\rm SO}(d):\qquad\Psi(Q\nabla y,\PP,\theta)
=\Psi(\nabla y,\PP,\theta).
\end{align*}
In particular let us note that the gradient terms 
are  frame-indifferent  as well.

%
 
The partial functional derivatives of $\Psi$
 give origin to  corresponding 
driving forces.  We use the symbol ``$\,\partial_w\,$'' to indicate both
partial differentiation with respect to the variable $w$ of a smooth 
functional 
or subdifferentiation of a convex 
functional. In case of a single-argument smooth functional, we
write shortly $(\cdot)'$. 

The {\it second  Piola-Kirchhoff 
stress} $\varSigma_{\rm el}$,  here  augmented by a contribution 
arising from the  gradient $\kappa_0$-term, is defined   as 
\begin{subequations}\label{stresses}\begin{align}
&\varSigma_{\rm el}=
  \partial_{\nabla y}\Psi  =
 \FE'(\nabla y\PP^{-1})\PP^{-\top} 
\label{elastic-stresses}
-\kappa_0{\rm div}\nabla^2y.
\ \ \ \ 
\intertext{Furthermore, the
{\it driving stress} for the plastification, again involving a contribution 
arising from the gradient $\kappa_1$-term, reads}
&\label{def-of-Sin}
\varSigma_{\rm in}=\partial_{\PP}^{}\Psi=
\nabla y^\top\FE'(\nabla y\PP^{-1}){:}(\PP^{-1})'+\FH'(\PP)    
 -{\rm div}(\kappa_1|\nabla\PP|^{q-2}\nabla\PP)
.
\end{align}\end{subequations}
Here  and in what follows, we use the (standard) notation ``$\,\cdot\,$'' and ``$\,:\,$'' 
and ``$\,\Vdots\,$''  for the  contraction 
product of vectors, 2nd-order, and 3rd tensors, respectively. 
 The term  $(\PP^{-1})'$ is a 4th-order tensor, namely
$$
(\PP^{-1})'=\Big(\frac{{\Cof}'\PP^\top}{\det\PP}-\frac{\Cof\PP^\top\otimes\Cof\PP}{(\det\PP)^2}\Big)
$$
where $\Cof \PP = (\det \PP) \PP^{-\top}$ is the classical {\it
  cofactor matrix}. The 
product 
$\nabla y^\top\FE'(\nabla y\PP^{-1}){:}(\PP^{-1})'$ turns out to be 
a 2nd-order tensor, as expected. 
Both variations of $\Psi$ above are taken with
respect to the corresponding $L^2$ topologies. 

Eventually, the 
{\it entropy} $\eta$, the {\it heat capacity} $c_{\rm v}$, and the
{\it thermal part}
$\vartheta$  of the internal energy are  classically recovered as 
\begin{align}
&\eta=-\psi_\theta'=-\FT'(\theta),\ \ \ \ 
c_{\rm v}=-\theta\psi_{\theta\theta}''=-\theta\FT''(\theta),\ \ 
\text{ and }\ \ \vartheta=\FT(\theta)-\theta\FT'(\theta).
\label{p-eta-cv}
\end{align}
Note  in particular that 
$\DT\vartheta=\FT'(\theta)\DT\theta-\DT\theta\FT'(\theta)-\theta\FT''(\theta)\DT\theta=c_{\rm v}(\theta)\DT\theta$. The {\it entropy equation} reads 
\begin{align}\label{ent-eq}
\theta\DT\eta+{\rm div}\,j=\text{dissipation rate}.
\end{align}
 We assume 
the heat flux $j$  to be  governed by the {\it Fourier} law
$j=-\mathscr{K}\nabla\theta$  where is  $\mathscr{K}$ the  {\it
  effective heat-transport tensor}, see \eqref{M-pull-back+} below. 
Substituting $\eta$ from \eqref{p-eta-cv} into \eqref{ent-eq}, we 
obtain 
the {\it heat-transfer equation}
\begin{align*}
c_{\rm v}(\theta)\DT\theta-{\rm div}(\mathscr{K}\nabla\theta)=\text{dissipation rate}.
\end{align*}
Note that,  $c_{\rm v}$ depends \UUU on the \EEE temperature only  through 
$\FT$. 

The  model consists  \UUU of \EEE the following  system of
semilinear equations  
\begin{subequations}\label{system}
\begin{align}\label{momentum-eq}
&\varrho\DDT y={\rm div}
\,
\varSigma_{\rm el}
+
g(y),\!\!&&\text{({\sf momentum equilibrium})}
\\&\label{flow-rule-pi}
\partial_\PR^{}\mathfrak{R}\big(\theta;\DT\PP\PP^{-1}\big)
+
\,\varSigma_{\rm in}\PP^\top
\ni0,
&&\text{({\sf flow rule for \UUU the \EEE inelastic strain})}
\\
&c_{\rm v}(\theta)\DT\theta
={\rm div}\big(\mathscr{K}(\PP,\theta)\nabla\theta\big)+
\partial_\PR^{}\mathfrak{R}\big(\theta;\DT\PP\PP^{-1}){:}(\DT\PP\PP^{-1})
\!\!
&&\text{({\sf heat-transfer equation})}
\label{system-heat}
\end{align}
\end{subequations}
where $\mathfrak{R}=\mathfrak{R}\big(\theta;\PR)$ is the (possibly nonsmooth)
{\it (pseudo)potential} related to dissipative forces of viscoplastic
origin, $\PR$ is 
the placeholder for the plastification rate $\DT\PP\PP^{-1}$, and 
$\partial_\PR^{}\mathfrak{R}$ stands for the \DELETE{convex} subdifferential of 
the function $\mathfrak{R}(\theta;\cdot)$, which is assumed  to be
 convex. The right-hand side of \eqref{momentum-eq} features
the pull-back $g{\circ}y$ of the {\it actual} gravity force
$g:\R^d\to\R^d$. This is the most simple example of a  {\it
  nondead}  load, \UUU i.e., a load given in terms of the actual
deformed configuration of the body. \EEE

The {\it effective  heat-\UUU transfer \EEE tensor} 
$\mathscr{K}$ is to be related with the symmetric {\it heat-conductivity tensor}
 $\Frakk=\Frakk( \theta )$
which is  a  given material property,  see \eqref{ass-M-K}
below.  
 The need for such {effective} quantities stems  from  the fact that 
driving forces are to be considered Eulerian in nature, so that a
pull-back to the reference configuration is imperative. \UUU We use
the adjective {\it effective} to indicate quantities that are
defined on the reference configuration but act on the actual one. A first choice
in this direction is  \EEE
\begin{align}
\mathscr{K}(F,\theta)
&=(\det F)F^{-1}\Frakk(\theta) F^{-\top}\!
=(\Cof F^\top)\Frakk(\theta)F^{-\top}\!
\label{M-pull-back}
=\frac{(\Cof F^\top)\Frakk(\theta)\Cof F}{\det F}.
\end{align}
This is just the usual pull-back transformation of 2nd-order covariant tensors. 
In the isotropic case $\Frakk(\theta)=k(\theta)\mathbb{I}$, 
relation  
\eqref{M-pull-back} can also be written 
by using the right Cauchy-Green tensor $C$ as 
\begin{align}\label{M-pull-back-iso}
\mathscr{K}(C,\theta)  =\det C^{1/2} k(\theta)C^{-1}
\quad\text{with }\ \ C=F^\top F, 
\end{align}
cf.\ \cite[Formula (67)]{DuSoFi10TSMF} or
\cite[Formula (3.19)]{GovSim93CSD2} for mass transport instead of
heat. \UUU In fact, the effective transport-coefficient tensor 
is a function of $C$ in general anisotropic cases as well,  cf.\ 
\cite[Sect.\,9.1]{KruRou18MMCM}. In view of this, \EEE
  we now use our smallness assumption   $\UUU E_{\rm el}\sim 0$,
which yields only $F^\top F\sim\PP^\top\PP$, \EEE
in order to infer that \UUU we can, in fact, substitute \EEE $F$ 
with  $\PP$
into \eqref{M-pull-back} as a good modelling ansatz\UUU, even though 
there need not be $\PP\sim F$\EEE. 
Thus, 
the relation \eqref{M-pull-back} 
\UUU turns into \EEE
\begin{align}\label{M-pull-back+}
\mathscr{K}(\PP,\theta)=
\frac{(\Cof\PP^\top)\Frakk(\theta)\Cof\PP}{\det\PP}.
\end{align}
This expression bears the advantage of being independent of $(\nabla y)^{-1}$, 
which turns out useful in relation with estimation and passage to the limit
arguments,  cf.\ \cite{KruRou18MMCM,RouTom??TMPE}.





The plastic flow rule \eqref{flow-rule-pi}  complies with the
so-called {\it plastic-indifference} requirement. Indeed,  the
evolution is 
insensible to prior plastic deformations, for the stored energy 
\begin{equation*}
\widehatFM=\widehatFM(F,\PP):=\FE(F_{\rm el})\qquad\text{ with }\ \
F_{\rm el}=F\PP^{-1}
\end{equation*}
and the dissipation potential  
$$
\widehat{\mathfrak{R}}=
\widehat{\mathfrak{R}}(\PP,\theta;\DT\PP)=\mathfrak{R} (\theta;\DT \PP \PP^{-1})
$$
\UUU complies with the following invariant properties \EEE
\begin{align}
&\widehatFM(FQ,\PP Q)=\widehatFM(F,\PP),\ \ \ \ \FH(\PP Q)=\FH(\PP),
\ \text{ and }\ 
\widehat{\mathfrak{R}}(\PP Q,\theta;\DT\PP Q) 
=\widehat{\mathfrak{R}}(\PP,\theta;\DT\PP)
\label{plast-indif}\end{align} 
for  any  $Q\in{\rm SO}(d)$,  as  discussed  by
Mandel \cite{Mand72PCV} and later  in \cite{Miel02FELG,MiRoSa??GERV}. 
 In particular,  we can equivalently test the flow rule
\eqref{flow-rule-pi} by $\DT\PP\PP^{-1}$ or  rewrite it  as  
\begin{align}\label{flow-rule-pi+}
\partial_\PR^{}\mathfrak{R}\big(\theta;\DT\PP\PP^{-1}\big)\PP^{-\top}
+\,\varSigma_{\rm in}
= \partial_{\DT\PP}\widehat{\mathfrak{R}}\big(\PP,\theta;\DT\PP\big)+\,\varSigma_{\rm in}\ni0
\end{align}
and test it on by $\DT\PP$ obtaining (at least formally) that
\begin{align}\label{test-of-flow-rule}
\partial_\PR^{}\mathfrak{R}\big(\theta;\DT\PP\PP^{-1}\big)
{:}\DT\PP\PP^{-1}=-\varSigma_{\rm in}\PP^\top\!{:}\DT\PP\PP^{-1}
=-\varSigma_{\rm in}\PP^\top\PP^{-\top}\!{:}\DT\PP=-\varSigma_{\rm in}{:}\DT\PP,
\end{align}
where we used also the algebra $AB{:}C=A{:}CB^\top$.

%
 %

System \eqref{system} has to be complemented by 
 suitable boundary and initial conditions.  As for the former
we prescribe 
\begin{subequations}\label{BC}
\begin{align}\label{BC-1}
&\varSigma_{\rm el}\nu-
\divS\big(\kappa_0\nabla^2y
\big)
+Ny=Ny_\flat(t),\ \ \ \ \ \ \ \ \ \kappa_0\nabla^2y{:}(\nu\otimes\nu)=0,
\\&
\PP =\mathbb{I},\ \ \ \ \ \ \text{ and }\ \ \ \ \ \ \mathscr K(\PP,\theta)\nabla\theta{\cdot}\nu+K\theta=K\theta_\flat(t)\qquad\text{on }\ \partial\Omega.
\label{BC-2}
\end{align}\end{subequations}
Relations \eqref{BC-1} correspond to a Robin-type mechanical condition. In 
particular,  $\nu$ is the external normal at $\partial \Omega$, 
${\rm div}_{_{\rm S}}$ denotes the surface divergence defined as a
trace of the surface gradient (which is a projection of the gradient on the 
tangent space through the projector $\bbI-\nu\otimes\nu$),
and $N$ is the elastic modulus of idealized {\it boundary}
springs. \UUU In particular, $y_\flat$ is the (possibly time-dependent)
position of the elastic support. \EEE
Similarly we prescribe  in \eqref{BC-2}  Robin-type boundary condition 
for temperature,  where $K$ is the boundary 
heat-transfer coefficient and $\theta_\flat$  is   the external temperature. 
We assume $\PP$ to be
    the identity at $\partial \Omega$, meaning that plasticization
    occurs in the bulk only. This is chosen here for the sake of
    simplicity and could  be  weakened by imposing $\PP=\mathbb{I}$ on a
    portion of $\partial \Omega$ or even by a Neumann condition, this
    last requiring however a  more  delicate estimation argument, see
    Remark \ref{lastremark}.
Eventually, initial conditions read 
\begin{align}\label{IC}
&y(0)=y_0,\ \ \ \ \ \ \DT y(0)=v_0,\ \ \ \ \ \ \PP(0)=\PP_0,
\ \ \ \ \ \ \theta(0)=\theta_0.
\end{align}

  The full
model  \eqref{system}  with  \eqref{BC}-\eqref{IC} is thermodynamically
consistent.  This can be checked  
by testing  relations (\ref{system}a) and (\ref{system}b)
 by $\DT y$ and $\DT\PP\PP^{-1}$,  respectively.   By adding up these contributions and using
\eqref{test-of-flow-rule} we  obtain 
the {\it mechanical energy balance} 
\begin{align}\nonumber\!\!
&\frac{\d}{\d t}\bigg(\int_\Omega\frac\varrho2|\DT y|^2
\,\d x
+\PsiM(\nabla y,\PP)
+\int_\Gamma \frac12N|y|^2\,\d S\bigg)
+\int_\Omega \partial_\PR^{}\mathfrak{R}\big(\theta;\DT\PP\PP^{-1}){:}(\DT\PP\PP^{-1})
\,\d x \nonumber
\\
&\quad =\int_\Omega 
g(y){\cdot}\DT y\,\d x
+\int_\Gamma 
Ny_\flat{\cdot}\DT y\,\d S.
\label{energy-conserv}\end{align}
 Note that this computation is formal and that such mechanical  energy balance  can be
rigorously justified in case of smooth solutions only,  which may
not exists as  $y$ lacks time regularity due
to the possible occurrence of shock-waves in the nonlinear hyperbolic system
(\ref{system}a).  
Also the power of the external mechanical load in \eqref{energy-conserv}, 
i.e.\ $y_\flat
{\cdot}\DT y$, is not well defined if $\nabla\DT y$ 
is not controlled.   This term will be hence treated 
  by by-part integration in time later on, see 
\eqref{by-part-boundary}.

By adding to \eqref{energy-conserv} the space integral of the heat equation 
\eqref{system-heat} we obtain the {\it total energy balance}
\begin{align}\nonumber
&\hspace{-1em}\frac{\d}{\d t}\bigg(
\!\!\!\linesunder{\int_\Omega\frac\varrho2|\DT y|^2
+\vartheta\,\d x}{kinetic and heat}{energies in the bulk}\!\!\!
+\!\!\!\!\linesunder{\PsiM(\nabla y,\PP)_{_{_{_{_{_{_{_{_{}}}}}}}}}\!}{mechanical energy}{in the bulk}\!\!\!\!
+\!\!\!\!\!\linesunder{\int_\Gamma \frac12N|y|^2\d S_{_{_{_{_{_{_{}}}}}}}\!}{mechanical energy}{on the boundary}\!\!\!\!\!\!\bigg)
\\[-.2em]
&\hspace{1em}=\!\!\!\!\!\linesunder{\int_\Omega 
g(y)\cdot\DT y\,\d x_{_{_{_{_{_{_{}}}}}}}\!}{power of}{bulk load}\!\!\!\!\!
+\!\!\!\!\linesunder{
\int_\Gamma Ny_\flat\cdot\DT y\,\d S}{power of surface}{load on $\Gamma$}\!\!\!\!
+\!\!\!\!\linesunder{\int_\Gamma K(\theta{-}\theta_\flat)\,\d S}{heat flux}{thru $\Gamma$}\!\!\!.\!\!\!\!
\!
\label{energy-conserv+}\end{align}

From \eqref{ent-eq} with the heat flux $j=-\mathscr{K}\nabla\theta$ and with 
the dissipation rate ( which here equals the  heat production rate)
$r:=\partial_\PR^{}\mathfrak{R}\big(\theta;\DT\PP\PP^{-1}){:}(\DT\PP\PP^{-1})$,
one can read the {\it entropy imbalance}
\begin{align}\label{ent-imbalance}
\frac{\d}{\d t}\int_\Omega\!\eta\,\d x
=\int_\Omega\!\frac{r+{\rm div}(\mathscr{K}\nabla\theta)}{\theta}\,\d x
=\int_\Omega\,\frac{r}{\theta}-\mathscr{K}\nabla\theta{\cdot}\nabla\frac1\theta\,\d x
+\int_\Gamma\frac{\mathscr{K}\nabla\theta}\theta{\cdot}\nu\,\d S
\\\qquad\quad=\int_\Omega\!\!\!\!\linesunder{\;\frac{r}{\theta}+\frac{\mathscr{K}\nabla\theta{\cdot}\nabla\theta}{\theta^2}}{entropy production}{rate in the bulk $\Omega$}\!\!\!\!\!\d x\ 
+\int_\Gamma\!\!\!\!\!\!\!\!\!\!\!\!\linesunder{\;\frac{K(\theta_\flat{-}\theta)}\theta}{entropy flux through}{the boundary $\Gamma$}\!\!\!\!\!\!\!\!\!\!\d S
\ge\int_\Gamma K\Big(\frac{\theta_\flat}\theta-1\Big)\theta\,\d S,
\nonumber
\end{align}
provided $\theta>0$ and $\mathscr{K}$ is positive semidefinite. In particular,
if the system is thermally isolated, i.e.\ $K=0$,
\eqref{ent-imbalance}  states that 
the overall entropy is nondecreasing in time.
This shows consistency with the 2nd Law of Thermodynamics.

Eventually, the 3rd Law of Thermodynamics (i.e.\ non-negativity of
temperature),  holds  as  soon as   the initial/boundary conditions
are  suitably  qualified so that $r \geq 0$.  In fact,   we do not consider any 
adiabatic-type effects, which might cause cooling.  

 We conclude the presentation of the model with a number of
remarks and comments on  \UUU our \EEE modeling choices and  possible extensions.

\begin{remark}[{\sl Kelvin-Voigt viscosity}]\label{rem-viscosity}
\upshape
Viscous mechanical dynamics, i.e., Kelvin-Voigt-type viscosity,
could 
be considered as well.  This gives 
rise to 
a  viscous contribution 
$\sigma_{\rm vi}(F,\DT F)$ to the  second Piola-Kirchhoff stress of the form $FS(U,\DT U)$  where  $S$ a symmetric tensor
and $U^2=F^\top F$, i.e.~$F=QU$ with $Q\in{\rm SO}(d)$, cf.\ \cite{Antm98PUVS}.
 Equivalently, one can express such viscous contribution as 
$\sigma_{\rm vi}(F,\DT F)=F\hat S(C,\DT C)$  for some given
function $\hat S$. \UUU For $\sigma_{\rm vi}$ to have a potential, \EEE  namely  $\sigma_{\rm vi}(F,\DT F)=\partial_{\DT
  F}\zeta(F,\DT F)$, 
frame indifference imposes that 
$\zeta(F,\DT{F})=
\zeta(QF,Q(F{+}A\DT{F}))$  for all $Q\in\mathrm{SO}(d)$ and all
$A\in\R^{d\times d}$ antisymmetric. 
This  in turn forces $\sigma_{\rm vi}$ to be strongly nonlinear. In
case of nonsimple materials, such nonlinear viscosity is
mathematically tractable, although its analysis is very delicate
\cite{MieRou??TKVR}.
In combination with the Maxwellian rheology (=creep), this combination is
sometimes referred as the {\it  Jeffreys  rheology}. 
\end{remark}

\begin{remark}[{\sl Scaling of  the plastic gradient}]\label{rem-gradients}
\upshape
Let us now inspect the relation between the parameter $\kappa_1$
and the length scale of plasticity in the material. To this aim, we 
consider $d=2$ and resort to  a  stratified situation 
where  $F$ 
and $\PP$ are constant the $x_1$ direction. We consider  a  shear band 
of width $2\ell$ along the plane $x_1=0$. By letting $\Omega$ be a
rectangle 
of unit size, $q=2$, 
and $\kappa_0=0$ for simplicity, and letting   $F_{\rm el}=\bbI$ and
thus $\nabla y=F=\PP$, the simplest profile of $\PP$ compatible with 
\eqref{free-energy+} is continuous and piecewise affine
in the $x_2$-coordinate. Assume a time-dependent shear (caused by boundary 
conditions) with a constant velocity in the $x_2$-direction, namely  
\begin{align}\nonumber
y(x)=\bigg(\!\!\begin{array}{c}y_1(x_1,x_2)\\y_2(x_1,x_2)\end{array}\!\!\bigg)
\quad&\text{with}\quad
y_1(x_1,x_2)=\begin{cases}x_1+t&\text{if }x_2>\ell,\\[-.1em]
\displaystyle{x_1+2t-2t\frac{x_2}{\ell}+t\frac{x_2^2}{\ell^2}}
&\text{if }0\le x_2\le\ell,\\[.7em]
\displaystyle{x_1-2t-2t\frac{x_2}{\ell}- t\frac{x_2^2}{\ell^2}}
&\text{if }0\ge x_2\ge-\ell,\\[-.1em]
x_1-t&\text{if }x_2<-\ell,
\end{cases}\quad
\\&\text{and}\quad
y_2(x_1,x_2)=x_2.\nonumber
\end{align}
The corresponding plastic strain reads
\begin{align}\label{P-scaling}
\PP =  F&=\nabla y=\bigg(\!\!\begin{array}{cc}1\!&\!
{\partial y_1}/{\partial x_2}\\
0\!&\!1\end{array}\!\!\bigg)
\qquad\text{with}\quad
\frac{\partial y_1}{\partial x_2}=\begin{cases}
\displaystyle{2t\frac{x_2{-}\ell}{\ell^2}}\!\!&\text{if }0\le x_2\le\ell,
\\[.0em]
\ \ \ 0&\text{if }
|x_2|>\ell,
\!\!\!\!\!\!\!\!\!\!\\[.0em]
\displaystyle{\UUU - \EEE 2t\frac{x_2{+}\ell}{\ell^2}}\!\!&\text{if }0\ge x_2\ge-\ell.
\end{cases}
\end{align}
Therefore, in view of \eqref{P-scaling} the
plastic-strain gradient
 $\nabla\PP$ has only one nonvanishing entry, namely  
\begin{align*}
\UUU \left|\frac{\partial\PP_{12}}{\partial x_2}\right|\EEE\sim\begin{cases}0&\text{if }|x_2|>\ell,\\[-.1em]
2t/\ell^2&\text{if }|x_2|\le\ell.
\end{cases}
\end{align*}
Correspondingly, a bounded energy contribution from the term
$\kappa_1|\nabla\PP|^2$ reveals the scaling
$$
\ell\sim\kappa_1^{1/3}t^{2/3}
$$
In particular, the occurrence of the plastic-strain gradient in the
model has a hardening effect, for the slip zone widens as $t^{2/3}$
by  accommodating  large plastic slips. One could control such core
size by letting $\kappa_1$ decay in time (which would be
rather 
disputable), or resorting \UUU to \EEE computing the plastic-strain gradient in
the  actual configuration. 
Alternatively, one may consider including the plastic-strain gradient  term into the dissipation 
potential rather than \UUU into \EEE the free energy. All these
options seem to give rise to new nonlinearities in  \eqref{system}
which seriously complicate the analysis. We hence prefer to stay with
the case of a constant and very small $\kappa_1$ (and thus a very
narrow slip zone), so that the validity of this simplified model can be
guaranteed on the relevant time scales. 
\end{remark}

\begin{remark}[{\sl Alternatives in gradient terms}]\label{rem-gradients+}
\upshape
 The gradient terms in the free energy 
\eqref{free-energy+}
are all Lagrangian in nature. This choice is here dictated by the sake
of mathematical simplicity. On the other hand,
for $\nabla y$ and $\PP$ one could resort \UUU to \EEE
computing gradients with respect to \UUU the \EEE actual
configurations. These choices however \UUU give rise to \EEE
additional nonlinearities which seem to prevent the possibility of
developing a complete existence theory. 
It would be also physically relevant to consider the elastic strain gradient 
$\nabla F_{\rm el}$ instead of the total strain
gradient $\nabla F=\nabla^2y$ in the free energy \eqref{free-energy+}. 
This would lead to 
replacing $-\kappa_0\,{\rm div}\nabla^2y$ by  $-{\rm div}\nabla
(\nabla y \, \Cof \PP^\top)$ in \eqref{elastic-stresses}, but \UUU it \EEE
would  also  give rise to extra contributions to (\ref{stresses}b,d). 
Ultimately,
these terms bring to additional mathematical difficulties which seem presently out
of reach.

\end{remark}

%
%

\begin{remark}[{\sl Self-interpenetration}]
\upshape
 In   large-strain theories, self-interpenetration of
 matter is usually  excluded by constraining   possible deformations. A possibility in this direction is to
resort to the classical Ciarlet-Ne\v cas condition \cite{CiaNec87ISCN}
which reads
$$\int_\Omega\det(\nabla y(x))\,\d x\le |y(\Omega)|.$$ 
In general terms, \UUU encompassing such condition \EEE seems however to be out of reach, as it is usually
the case for dynamic inelasticity \UUU with constraints. \EEE \DELETE{Note however that in the frame of
our general modeling ansatz $F_{\rm el} \sim \bbI$ one has that  $\det
\nabla y \sim (\det \PP)=1$.  Under such an approximation, the  Ciarlet-Ne\v
cas condition would be rewritten as $|\Omega| \le |y(\Omega)|$. Note
that the
latter condition passes to weak limits in our setting. Indeed, the
gradient term and the fact that $d=2,3$ ensure deformations are
compact in the space of continuous functions, which suffices to pass
to the limit.  }\COMMENT{AGAIN A LAGRANGE MULTILIER TO THIS CONSTRAINT 
SHOULD OCCUR AND IT IS TROUBLESOME!}
\end{remark}

\begin{remark}[{\sl More general thermal coupling}]\label{rem-cv-general}
\upshape
 A general  coupling between $F$ and $\theta$  seems to
call for viscosity in $F$, which is here not considered. On the other
hand,  one can  consider  making $\FT$ in \eqref{free-energy+} dependent 
on $\PP$  as well,  i.e.\ $\FT=\FT(\PP,\theta)$.
The heat capacity  would \UUU then \EEE depend also  on $\PP$, i.e.\ 
$c_{\rm v}(\PP,\theta)=-\theta\partial_{\theta\theta}^{}\FT(\PP,\theta)$ and 
the heat-transfer equation \eqref{system-heat}  would be augmented
by an {\it adiabatic term}, \UUU leading to \EEE     
$$
c_{\rm v}(\PP,\theta)\DT\theta
={\rm div}\big(\mathscr{K}(\PP,\theta)\nabla\theta\big)+
\partial_\PR^{}\mathfrak{R}\big(\theta;\DT\PP\PP^{-1}){:}(\DT\PP\PP^{-1})
+
\big(\partial_{\PP}^{}\FT(\PP,\theta)-\partial_{\PP}^{}\FT(\PP,0)\big)
\DT\PP.
$$
Due to the last term, the analysis is then  substantially 
more complicated  and rests on a  
  a suitable modification of Lemma~\ref{lem-est+} below,
cf.\ also \cite{KruRou18MMCM,MieRou15RIST,Roub13NPDE} for this technique.
\end{remark}

\begin{remark}[{\sl Isochoric plasticity/creep}]\label{ref-isochoric}\upshape
\UUU Isochoric \EEE plasticity or creep would  correspond to the
nonaffine holonomic constraint  $\det\PP=1$. \DELETE{Condition 
\eqref{plast-indif} would then be weakened, requiring it only for 
 the special linear group ${\rm SL}(d)$ instead of ${\rm GL}^+(d)$. }
The flow rule \eqref{flow-rule-pi+} would involve
 a {\it reaction force} to this constraint, resulting
 \REPLACE{to}{ into}
$$
\partial_\PR^{}\mathfrak{R}\big(\theta;\DT\PP\PP^{-1}\big)\PP^{-\top}
+\,\varSigma_{\rm in}\ni\varLambda\Cof\PP\ \ \ \ \text{ and }\ \ \ \ 
\det\PP=1,
$$
where $\varLambda$ is a $\R^{d\times d}$-valued Lagrange multiplier, note that 
we used also the formula $\Cof\PP=(\det\PP)'$. The mathematical analysis of 
such system seems open. A relevant model, which would  be
{\it approximately isochoric} for small $\delta>0$, can be tackled 
by the 
analysis presented in the following Sections~\ref{sec-anal}--\ref{sec-proof} when considering the hardening term of the type 
$$
\FH(\PP):=\begin{cases}\displaystyle{
\frac{\delta}{\max(1,\det\PP)^r}
+\frac{(\det\PP-1)^2}{2\delta}}\!\!&\text{ if }\ \det\PP>0,\\
\qquad+\infty&\text{ if }\ \det\PP\ge0\,;\end{cases}
$$
note that the minimum of this potential is attained just at the set 
${\rm SL}(d)$ of the isochoric plastic strains, 
and that it complies with \UUU condition
\eqref{ass-plast-large-HD-growth} ahead \EEE for $r\ge qd/(q-d)$
and also with the plastic-indifference condition \eqref{plast-indif}.
\end{remark}

\section{Existence of weak solutions}
\label{sec-anal}

This section introduces the definition of weak solution to the
problem and brings to the statement of the our main existence result,
namely Theorem \ref{thm}. Let us start by fixing some notation. 

We will use the standard notation $C(\cdot)$ for the space of continuous bounded
functions, $L^p$ for Lebesgue  spaces, and $W^{k,p}$ for Sobolev spaces whose 
$k$-th distributional derivatives are in $L^p$. Moreover, we will use the 
abbreviation $H^k=W^{k,2}$ and,  for all $p\geq 1$, we let the conjugate
exponent $p'=p/(p{-}1)$  (with $p'=\infty$ if $p=1$), and \UUU we use
the  notation \EEE
$p^*$ for the Sobolev exponent $p^*=pd/(d{-}p)$ for $p<d$,
$p^*<\infty$ for $p=d$, and $p^*=\infty$ for $p>d$.
Thus, $W^{1,p}(\Omega)\subset L^{p^*}\!(\Omega)$ or 
$L^{{p^*}'}\!(\Omega)\subset W^{1,p}(\Omega)^*$=\,the dual to $W^{1,p}(\Omega)$. 
In the vectorial case, we will write $L^p(\Omega;\R^d)\cong L^p(\Omega)^d$ 
and $W^{1,p}(\Omega;\R^d)\cong W^{1,p}(\Omega)^d$. 


Given the fixed time interval $I=[0,T]$, we denote by $L^p(I;X)$ the 
standard Bochner space of Bochner-measurable mappings $I\to X$, where
$X$ is a Banach space. Moreover,  $W^{k,p}(I;X)$ denotes the Banach space of 
mappings in  $L^p(I;X)$ whose $k$-th distributional derivative in time is 
also in $L^p(I;X)$.

Let us list here the assumptions on the data which
are used in the following: 
\begin{subequations}\label{ass}
\begin{align}
&\FE:\R^{d\times d}\to\R^+\ \text{ continuously differentiable},\ \
\label{ass-FM}
\mbox{$\sup_{F\in\R^{d\times d}}^{}$}
\frac{|\FE'(F)|}{1{+}|F|^{2^*/2-1}}<\infty,
\\[-.2em]\nonumber
&\FH:\R^{d\times d}\to[0,+\infty]\ \text{ continuously differentiable on }\ {\rm GL}^+(d),\\
&\qquad\FH(\PP)\ge\begin{cases}\epsilon
/(\det\PP)^r\!\!\!&\text{if }\ \det\PP>0,
\label{ass-plast-large-HD-growth}
\\[-.2em]\quad+\infty&\text{if }\ \det\PP\le0,\end{cases}\ \ \ \ 
r\ge\frac{qd}{r{-}q},
\ \ \ \ q>d,
\\
\label{ass-M-K}&\varrho>0,\ \ \mathbb K
:\R\to\R^{d\times d}\ \ 
\text{continuous, bounded, and
uniformly positive-definite,}
\\&
 \mathfrak{R}(\theta;\PR) = \mathfrak{R}_1(\theta;\PR) +
\mathfrak{R}_2(\theta;\PR),  \label{ass-R-decomp}\\
&\qquad  \mathfrak{R}_1(\theta;\PR) = \sigma_{_{\rm Y}}\!(\theta)|R| \ \
\text{where }\ \sigma_{_{\rm Y}}\!:\R^+\to\R^+\ \text{ is continuous and bounded},\label{ass-R1}\\
&\qquad  \mathfrak{R}_2 :\R_+\times \R^{d\times d}  \to\R^+\ \text{
continuously differentiable},\label{ass-R2}\\
&\nonumber
\exists a_\mathfrak{R}^{}>0\ 
\forall\theta\in\R,\ \PR,\PR_1,\PR_2\in\R^{d\times d}:\ \ \ 
\\&\qquad 
(\partial_{\PR}\mathfrak{R}(\theta;\PR_1)
-\partial_{\PR}\mathfrak{R}(\theta;\PR_2)){:}(\PR_1{-}\PR_2)
\ge a_\mathfrak{R}^{}|\PR_1{-}\PR_2|^2, \label{ass-R-monotone}
\\\label{ass-R}&\qquad \, a_\mathfrak{R} |\PR|^2\le
\mathfrak{R}(\theta;\PR)\le(1+|\PR|^2)/a_\mathfrak{R},\ \ \ \ 
\mathfrak{R}(\theta;-\PR)=\mathfrak{R}(\theta;\PR),
\ \ \ 
\\\label{ass-cv}&
c_{\rm v}: \R_+ \to \R_+  \ \text{ continuous, bounded, 
with positive infimum,}\ 
\\&y_0\!\in\! H^2
(\Omega)^d,\ \ 
v_0\!\in\! L^2(\Omega)^d,\ \ 
\PP_0\!\in\! W^{1,q}
(\Omega)^{d\times d},\ \ 
\FH(\PP_0)\!\in\!L^1(\Omega),\ \ q>d,
\\&
\theta_0\!\in\! L^1(\Omega),\ \ \theta_0\ge0,
\label{ass-IC}
\\&\label{ass-load}
g\in C(\R^d)^d,
\ \  \theta_\flat\in L^1(\Sigma),\ \ \theta_\flat\ge0,
\end{align}\end{subequations}
{where $q$ in (\ref{ass}b,j) refers to the exponent used 
in \eqref{free-energy+}. The nonnegative function 
$\sigma_{_{\rm Y}}\!=\sigma_{_{\rm Y}}\!(\theta)$ is in the position of a 
temperature-dependent {\it yield stress},  i.e., a threshold  triggering 
plastification.


 Note that the polynomial growth assumption on $\FE'$ from
\eqref{ass-FM} is not particularly restrictive for $d=2$ but requires $\FE'(F)
\lesssim |F|^2$ for $d=3$. Such a restricted growth is however
compatible with the assumption that $F_{\rm el}$ is close to the
identity. Indeed, if $\FE$ were a linearization of a
nonlinear elastic energy density  at $F_{\rm el}=\mathbb I$  one would have
$$\FE(F_{\rm el}) = \frac12 (F_{\rm el}{-}\mathbb I){:} {\rm D}^2 \FE
(\mathbb I){:} (F_{\rm el}{-}\mathbb I)$$
so that the bound in \eqref{ass-FM} trivially holds. Note that the
fourth-order tensor $ {\rm D}^2 \FE
(\mathbb I)$ plays here the role of (a multiple of) the 
elasticity-moduli  tensor in linearized elasticity.  

In  the case when $\mathfrak{R}(\theta;\cdot)$ is nonsmooth
at 0, its subdifferential are indeed set-valued and thus 
\eqref{ass-R} is to be satisfied for any selection from the involved 
subdifferentials. On the other hand, the heat-production rate
$\partial_\PR^{}\mathfrak{R}\big(\theta;\DT\PP\PP^{-1}){:}(\DT\PP\PP^{-1})$ 
in \eqref{system-heat}  remains single-valued as
$\partial_\PR^{}\mathfrak{R}\big(\theta;\cdot)$ is multivalued just in
 $\DT\PP\PP^{-1}=0$. 

Testing \eqref{system} by smooth functions and using Green formula in space
(even twice for \eqref{momentum-eq} together with a surface 
Green formula over $\Gamma$), the boundary conditions \eqref{BC},
by-part integration in time for (\ref{system}a,b), and the definition 
of the convex subdifferential $\partial_\PR^{}\mathfrak{R}\big(\theta;\cdot)$ 
for \eqref{flow-rule-pi}, we arrive at  the following. 

 
\begin{definition}[Weak formulation of 
\eqref{system}  with  \eqref{BC}-\eqref{IC}]
\label{def}
We call 
the triple 
$(y,\PP,\theta)$ with
\begin{subequations}\label{weak-form-}\begin{align}
  &y\in L^\infty(I;H^2(\Omega)^d)\cap H^1(I;L^2(\Omega)^d),
\\[-.2em]\label{weak-form-Pi}
  &\PP\in L^\infty(I;W^{1,q}(\Omega)^{d\times d})\cap
H^1(I;L^2(\Omega)^{d\times d})\ \ \text{ with }\ \ \frac1{\det\PP}\in L^\infty(Q),
\\[-.2em]&
\theta
\in L^1(I;W^{1,1}(\Omega)),\ \ \ \ \theta\ge0\ \text{ a.e.\ on }\ Q,
\intertext{a {weak solution} to the initial-boundary-value problem
\eqref{system}  with \eqref{BC}-\eqref{IC}
if}\label{weak-form-Delta-Pi}
&{\rm div}\big(\kappa_1|\nabla\PP|^{q-2}\nabla\PP\big)\in L^2(Q)^{d\times d}
\end{align}\end{subequations}
and if the following hold:
\begin{itemize}\item[\rm (i)]
The  weak formulation of  the momentum balance \eqref{momentum-eq} with 
\eqref{elastic-stresses} 
\begin{subequations}\label{weak-form}\begin{align}\nonumber
&\int_Q\Big(\FE'(\nabla y\PP^{-1}){:}(\nabla\tilde y\,\PP^{-1})
-\varrho\DT y{\cdot}\DT{\tilde y}
+\kappa_0\nabla^2y{\Vdots}\nabla^2\tilde y
\,\d x\, \d t
\\[-.4em]
&\hspace{3em}
+\int_\Sigma\! N y{\cdot}\tilde y\,\d S\, \d t
=\int_Q\!
g(y){\cdot}\tilde y\,\d x\, \d t+
\int_\Omega\!v_0{\cdot}\tilde y(0)\,\d x+\int_\Sigma\!
N y_\flat{\cdot}\tilde y\,\d S\, \d t
\label{momentum-weak}\end{align}
holds for any $\tilde y$ smooth with $\tilde y(T)=0$.
\item[\rm (ii)] The weak formulation of  the plastic flow rule 
\eqref{flow-rule-pi} 
in the form \eqref{flow-rule-pi+} with \eqref{def-of-Sin}
\begin{align}\nonumber
&\int_Q\Big(\mathfrak{R}\big(\theta;\PR\big)
+\FE'(\nabla y\PP^{-1})\PP^\top{:}(\nabla
y((\PP^{-1})')(\PR{-}\DT\PP\PP^{-1})
\\[-.6em]&\hspace{2em}
-{\rm div}\big(\kappa_1|\nabla\PP|^{q-2}\nabla\PP\big)\PP^\top
{:}(\PR{-}\DT\PP\PP^{-1})\Big)\,\d x\, \d t
\ge\int_Q\mathfrak{R}\big(\theta;\DT\PP\PP^{-1})\,\d x\, \d t
\label{weak-form-P}
\end{align}
holds for any $\PR$ smooth.
\item[\rm (iii)] The weak formulation of the heat equation \eqref{system-heat}
\begin{align}&\nonumber
\int_Q\!\mathscr{K}(\PP,\theta)\nabla\theta{\cdot}
\nabla\tilde\theta-C_{\rm v}(\theta)\DT{\tilde\theta}
-\partial_\PR^{}\mathfrak{R}\big(\theta;\DT\PP\PP^{-1}){:}(\DT\PP\PP^{-1})
\tilde\theta\,\d x\, \d t
\\[-.3em]&\qquad\qquad\qquad\qquad\qquad
+\int_\Sigma\! K\theta\tilde\theta\,\d S\, \d t
=\int_\Sigma\! K\theta_\flat\tilde\theta\,\d S\, \d t
+\int_\Omega\! C_{\rm v}(\theta_0)\tilde\theta(0)\,\d x
\end{align}\end{subequations}
holds for any $\tilde\theta$ smooth with $\tilde\theta(T)=0$
and with $C_{\rm v}(\cdot)$ 
a primitive function to $c_{\rm v}(\cdot)$.
\item[\rm (iv)] The remaining initial conditions $y(0)=y_0$ and $\PP(0)=\PP_0$ 
are satisfied.
\end{itemize}
\end{definition}

Let us note that, due to \eqref{weak-form-Pi}\DELETE{ together with 
\eqref{weak-form-Delta-Pi}}, we have
also $\PP^{-1}=\Cof\PP^\top/\det\PP
\in L^\infty(Q)^{d\times d}$ so that in particular
${\rm div}\big(\kappa_1|\nabla\PP|^{q-2}\nabla\PP\big)\PP^\top
{:}\DT\PP\PP^{-1}\in L^1(Q)$  due to \eqref{weak-form-Delta-Pi}  and 
thus \eqref{weak-form-P} has a good sense.



 Our main analytical result is an existence theorem for weak
solutions.  This is to  be seen as a mathematical consistency property of
the proposed model. It reads as follows. 

\begin{theorem}[Existence of weak solutions]\label{thm}
Let the assumptions \eqref{ass} hold. Then,  there exists  a weak solution 
$(y,\PP,\theta)$ in the sense of Definition~{\rm {\ref{def}}}  with $\PP$ valued in ${\rm GL}^+(d)$. 
Moreover, the energy conservation \eqref{energy-conserv+} holds on the time 
intervals $[0,t]$ for all $t\in I$, i.e. 
\begin{align}\nonumber
& \int_\Omega\frac\varrho2|\DT y(t)|^2
+
C_{\rm v}(\theta(t))\,\d x+\PsiM(\nabla y(t),\PP(t))
+\int_\Gamma \frac12N|y(t)|^2\d S
\\[-.2em]\nonumber
&\qquad =\int_0^t\!\!\int_\Omega g(y)\cdot\DT y\,\d x\, \d t
+\int_0^t\!\!\int_\Gamma Ny_\flat\cdot\DT y+K(\theta{-}\theta_\flat)\,\d S\, \d t
\\[-.2em]&\qquad\qquad\ +
\int_\Omega\frac\varrho2|v_0|^2+C_{\rm v}(\theta_0)\,\d x+\PsiM(\nabla y_0,\PP_0)
+\int_\Gamma \frac12N|y_0|^2\d S\,.
\label{energy-conserv++}\end{align}
\end{theorem}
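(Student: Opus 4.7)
My plan is to construct solutions via Galerkin approximation, possibly combined with a mild regularization of the heat equation to handle its $L^1$ right-hand side. I would choose finite-dimensional subspaces $V_n \subset H^2(\Omega;\R^d)$, $W_n \subset W^{1,q}(\Omega;\R^{d\times d})$ (incorporating the Dirichlet condition $\PP=\bbI$ on $\partial\Omega$) and $Z_n \subset H^1(\Omega)$, each spanned by smooth basis functions with the usual density property. Seeking $(y_n,\PP_n,\theta_n)$ satisfying the projected system reduces the problem to a coupled DAE, whose local-in-time solvability is granted by the maximal monotonicity of $\partial_\PR^{}\mathfrak{R}(\theta;\cdot)$ and the strict coercivity of $\FH$, the latter keeping $\det\PP_n$ away from zero; global existence follows from the a priori bounds below.

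The central bounds come from the energy balance. Testing the Galerkin momentum equation by $\DT y_n$ and the flow rule by $\DT\PP_n\PP_n^{-1}$ and using \eqref{test-of-flow-rule} produces the mechanical balance \eqref{energy-conserv}, giving uniform control of $\DT y_n$ in $L^\infty(I;L^2)$, of $\nabla^2 y_n$ in $L^\infty(I;L^2)$, of $\nabla\PP_n$ in $L^\infty(I;L^q)$, and, through $\FH$, of $1/\det\PP_n$ in $L^\infty(Q)$; the coercivity of $\mathfrak{R}$ also provides $\DT\PP_n\PP_n^{-1}\in L^2(Q)$. Since $q>d$, Morrey's embedding yields equicontinuity of $\PP_n$ which, combined with the $L^\infty$-bound on $1/\det\PP_n$, forces $\det\PP_n\ge c>0$ uniformly, and hence $\PP_n^{-1}$ uniformly bounded. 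Adding the space integral of the heat equation to the mechanical balance yields \eqref{energy-conserv+} and therefore $L^\infty(I;L^1(\Omega))$-control of $C_{\rm v}(\theta_n)$ and of $\theta_n$; a Boccardo--Gallou\"et truncation argument applied to the heat equation with $L^1$ right-hand side then gives uniform bounds on $\nabla\theta_n$ in $L^p(Q)$ for every $p<(d{+}2)/(d{+}1)$. Non-negativity $\theta_n\ge 0$ follows from the non-negativity of the data and of the dissipation by testing with the negative part of $\theta_n$.

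For the passage to the limit, Aubin--Lions applied to the listed spaces yields the strong convergences $\PP_n\to\PP$ in $C(I;C(\overline\Omega))$ and $\nabla y_n\to\nabla y$ in $C(I;L^r(\Omega))$ for every finite $r$, which together with the uniform invertibility of $\PP_n$ identify the limits of the nonlinear elastic stress $\FE'(\nabla y\,\PP^{-1})\PP^{-\top}$ and of the effective conductivity $\mathscr{K}(\PP,\theta)$. The main obstacle, as I see it, is twofold. First, the flow rule \eqref{weak-form-P} contains the $q$-Laplacian-type term ${\rm div}(\kappa_1|\nabla\PP|^{q-2}\nabla\PP)$, and passing to the limit there requires strong convergence of $\nabla\PP_n$ in $L^q(Q)$; I would establish this by a Minty-type monotonicity argument based on the strict convexity of $\PR\mapsto\int|\nabla\PR|^q$, comparing the limit flow rule tested by $\DT\PP_n\PP_n^{-1}-\DT\PP\PP^{-1}$. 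Second, identifying the limit of the dissipation rate $r_n=\partial_\PR^{}\mathfrak{R}(\theta_n;\DT\PP_n\PP_n^{-1}){:}(\DT\PP_n\PP_n^{-1})$ in the heat equation requires strong $L^2(Q)$-convergence of $\DT\PP_n\PP_n^{-1}$, which I would derive from passage to the limit in the mechanical energy balance together with the monotonicity assumption \eqref{ass-R-monotone}. The same chain of arguments upgrades \eqref{energy-conserv++} from a semicontinuity inequality to the claimed equality, since the Galerkin energy balance is exact and each term on its two sides is shown to converge.
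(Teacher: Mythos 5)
Your overall strategy (Galerkin, energy estimates, Healey--Kr\"omer-type control of $\det\PP$, Minty/monotonicity for the $q$-Laplacian and for $\DT\PP\,\PP^{-1}$, Boccardo--Gallou\"et for the temperature) is the same as the paper's, but two steps as you have written them would not go through at the discrete level. First, you propose to test the Galerkin flow rule by $\DT\PP_n\PP_n^{-1}$; this is not an admissible test function, because the product $\DT\PP_n\PP_n^{-1}$ does not belong to the finite-dimensional subspace $W_n$ (it is not a linear combination of the basis, unlike $\DT\PP_n$ itself). The paper circumvents this by discretizing the flow rule in the equivalent form \eqref{flow-rule-pi+}, i.e.\ with the extra factor $\PP^{-\top}$ absorbed into the equation, so that the legitimate Galerkin test by $\DT\PP_n$ reproduces the identity \eqref{test-of-flow-rule} and hence the mechanical energy balance. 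Without this (or an equivalent device) your central a priori bounds are not actually derived.

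Second, your temperature estimates are obtained by nonlinear tests --- the truncation $\chi_\omega(\theta_n)$ of Boccardo--Gallou\"et/Feireisl--M\'alek type and the negative-part test for $\theta_n\ge0$ --- which again are not admissible in a Galerkin scheme, since $\chi_\omega(\theta_n)$ and $\min(0,\theta_n)$ do not lie in $Z_n$. Moreover, with the unregularized quadratic heat source $r_n=\partial_\PR^{}\mathfrak{R}(\theta_n;\DT\PP_n\PP_n^{-1}){:}(\DT\PP_n\PP_n^{-1})\in L^1$ the only linear test available at the discrete level (by $\theta_n$ itself) does not yield bounds uniform in $n$. This is precisely why the paper runs a two-parameter scheme: it Yosida-smooths $\mathfrak{R}_1$ from \eqref{ass-R-decomp}, truncates the heat source as in \eqref{system-heat+} and the data as in \eqref{BC-IC+} so that an $L^2$-theory (test by $\vartheta_{\eps h}$) works at the Galerkin level with $\eps$-dependent constants, passes $h\to0$ first, and only then performs the nonlinear $L^1$-type tests on the limit PDE (Lemma~\ref{lem-est+}) to get $\eps$-independent bounds before sending $\eps\to0$; the Yosida smoothing also removes your reliance on a set-valued ODE/DAE argument for discrete solvability, and \eqref{ass-R-conv} together with a Vitali-type argument identifies the limit of the dissipation rate. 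Your phrase ``possibly combined with a mild regularization'' points in this direction, but as written the proposal applies the nonlinear tests directly to the Galerkin solutions, and that step fails; ordering the limits ($h\to0$ before the regularization is removed) is the essential missing ingredient. The remaining items (strong convergences via monotonicity, identification of the nonlinear stresses by compactness, and recovery of \eqref{energy-conserv++} from the exact mechanical balance plus the heat equation tested by $1$) match the paper's argument.
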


We will prove this  result  in Propositions~\ref{prop-est}-\ref{prop-conv2}
by a regularization, transformation, and approximation  procedure. 
This  also provides a (conceptual) algorithm that is numerically stable  and 
 converges as  the discretization and the regularization parameters
 tend to $0$.

\section{Galerkin approximation, stability, convergence}\label{sec-proof}
 We devote \UUU this \EEE section to the proof of the existence result, namely
 Theorem~\ref{thm}. As already mentioned,  
we apply a constructive method  \UUU providing \EEE an approximation of
 the problem.  This results from combining a regularization in
 terms of the small parameter $\varepsilon>0$ and a Galerkin
 approximation, described by the small parameter $h>0$ instead.  \UUU
 The regularization is aimed on the one hand at smoothing the
 potential $\mathfrak{R} (\theta,R)$ in a neighborhood of $R=0$ and on
 the other hand at
 making the heat-production rate and boundary and initial
 temperatures bounded, see below. We obtain \EEE the existence of approximated solutions,
 their stability (a-priori estimates), and their convergence to weak 
solutions, at least in terms of subsequences. The general philosophy of 
a-priori  estimation  relies on the fact that temperature plays a role in 
connection with dissipative mechanisms only: adiabatic 
effects are omitted and most estimates on the mechanical part of the system are
independent of temperature and its discretization. The estimates and the 
convergence rely on the independence of the heat capacity  from  mechanical 
variables, cf.\ also Remark~\ref{rem-cv-general} above.


Let us begin by detailing the regularization. First, we smoothen the convex 
(but generally nonsmooth) potential 
$\mathfrak{R}(\theta;\cdot):\R^{d\times d}\to\R^+$  in order to be
able to use the  conventional theory of ordinary-differential equations for 
the Galerkin approximation. To this goal, we  use the splitting  
$\mathfrak{R}(\theta;\PR)=
\mathfrak{R}_1(\theta;\PR)+\mathfrak{R}_2(\theta;\PR)$ from
\eqref{ass-R-decomp}.  
Then, we exploit the general Yosida-regularization construction to the 
nonsmooth part, i.e. $\mathfrak{R}_\varepsilon(\theta;\PR):=\mathfrak{R}_{1,\varepsilon}(\theta;\PR)
+\mathfrak{R}_2(\theta;\PR)$ with 
\begin{align}\label{regularization-R} 
\mathfrak{R}_{1,\varepsilon}(\theta;\PR):=
\min_{\widetilde\PR\in\R^{d\times d}}\left(\mathfrak{R}_1(\theta;\widetilde\PR)
+\frac1{2\varepsilon}|\widetilde\PR{-}\PR|^2\right) 
=
\left\{
  \begin{array}{ll}
    \displaystyle\frac{\sigma_{_{\rm Y}}\!(\theta)}{2\varepsilon} |\PR|^2\quad &\text{if}  \
    |\PR|\leq \varepsilon\\[4mm]
\sigma_{_{\rm Y}}\!(\theta)\left(|\PR| {-} \displaystyle\frac{\varepsilon}{2}\right)\quad &\text{if}  \
    |\PR|> \varepsilon,
  \end{array}
\right.
\end{align}
{where $\sigma_{_{\rm Y}}\!$ is from \eqref{ass-R1}. 
Properties  (\ref{ass}d-f)  of the original potential $\mathfrak{R}$ 
 entail analogous properties  for the regularization,  in
particular 
\begin{subequations}\label{ass-R-eps}\begin{align}
&\label{ass-R-eps-smooth}
\exists \varepsilon_0>0\ \forall\,0<\varepsilon\le\varepsilon_0:
\forall\theta\!\in\!\R,\ \PR_1,\PR_2\!\in\!\R^{d\times d}:\ \ \ 
\\&\qquad\qquad\qquad\label{ass-R-reg-monotone}
(\partial_{\PR}\mathfrak{R}_\varepsilon^{}(\theta;\PR_1)
-\partial_{\PR}\mathfrak{R}_\varepsilon^{}(\theta;\PR_2)){:}(\PR_1{-}\PR_2)
\ge\frac12a_\mathfrak{R}^{}|\PR_1{-}\PR_2|^2,
\\\label{ass-R-reg}&\qquad\qquad\qquad\, \frac12a_\mathfrak{R} |\PR|^2\le
\mathfrak{R}_\varepsilon^{}(\theta;\PR)\le(1+|\PR|^2)/a_\mathfrak{R}
\intertext{where $a_\mathfrak{R}^{}>0$ is from \eqref{ass-R}.  In
  addition, we can prove that}
&\label{ass-R-conv}
\forall\theta\!\in\!\R,\ \PR\!\in\!\R^{d\times d},\ \PR\ne0:\ \ \ 
\lim_{\varepsilon\to0,\ \widetilde\theta\to\theta}
\partial_\PR\mathfrak{R}_\varepsilon^{}(\widetilde\theta;\PR)=\partial_\PR\mathfrak{R}(\theta;\PR).
\intertext{Note that $\mathfrak{R}(\theta;-\PR)=\mathfrak{R}(\theta;\PR)$ assumed in 
\eqref{ass-R} implies $\partial_\PR\mathfrak{R}_\varepsilon^{}(\theta;0)=0$ so that
the limit in \eqref{ass-R-conv} exists even if $\PR=0$ and we have}
&\forall\theta\!\in\!\R:\ \ \ 
\lim_{\varepsilon\to0,\ \widetilde\theta\to\theta}
\partial_\PR\mathfrak{R}_\varepsilon^{}(\widetilde\theta;0)=0\in\partial_\PR\mathfrak{R}(\theta;0).
\end{align}\end{subequations} 

In order to simplify the convergence proof, we apply the so-called 
enthalpy transformation  to the heat equation. This consists in rescaling 
temperature by introducing a new variable 
\begin{align}\label{def-of-vartheta}
\vartheta=C_{\rm v}(\theta) 
\end{align}
 where $C_{\rm v}$ is the primitive of $c_{\rm v}$
vanishing in $0$. 
Note that $\DT\vartheta=c_{\rm v}(\theta)\DT\theta$ and that
$C_{\rm v}$ is increasing so that its inverse  $ C_{\rm v}^{-1}$ exists 
and $\nabla\theta=\nabla C_{\rm v}^{-1}(\vartheta)=\nabla\vartheta/c_{\rm v}(\theta)
=\nabla\vartheta/c_{\rm v}(C_{\rm v}^{-1}(\vartheta))$.  Upon
letting 
\begin{align*}
\mathfrak{K}(\PP,\vartheta):=
\frac{\mathscr{K}(\PP,C_{\rm v}^{-1}(\vartheta))}{{c_{\rm v}(C_{\rm v}^{-1}(\vartheta))}},\ \ \ \ 
\end{align*}
 we rewrite and regularize the system \eqref{system} by 
\begin{subequations}\label{system+}
\begin{align}\label{momentum-eq+}
&\varrho\DDT y={\rm div}
\,
\varSigma_{\rm el}
+
g(y),
\\&\label{flow-rule-pi++}
\partial_\PR^{}\mathfrak{R}_\varepsilon\big(C_{\rm v}^{-1}(\vartheta);
\DT\PP\PP^{-1}\big)\PP^{-\top}
+
\,\varSigma_{\rm in}
=0,
\\
&\DT\vartheta
={\rm div}\big(\mathfrak{K}(\PP,\vartheta)\nabla\vartheta\big)
+\frac{\partial_\PR^{}\mathfrak{R}_\varepsilon\big(\theta;\DT\PP\PP^{-1}){:}(\DT\PP\PP^{-1})}
{1+\varepsilon|\DT\PP^{}\PP^{-1}|^2},
\!\!
\label{system-heat+}
\end{align}
\end{subequations}
where $\varSigma_{\rm el}$ and $\varSigma_{\rm in}$ are again from \eqref{stresses}.
 Note that we used \eqref{flow-rule-pi++} in the form \eqref{flow-rule-pi+},
which allows the test by $\DT\PP$, in contrast to \eqref{flow-rule-pi} which 
is to be tested by the product $\DT\PP\PP^{-1}$ which  is  not
legitimate  at the level of 
the Galerkin discretisation. \UUU Let us note that due to the boundedness/growth assumptions \eqref{ass-R}, 
the dissipation rate has a quadratic growth and the regularization of the
heat-production rate in \eqref{system-heat+} is bounded. We are hence in the \EEE
position of resorting to a $L^2$-theory instead of the $L^1$-theory for the
regularized heat problem.

 The boundary conditions are correspondingly 
modified, 
i.e.\ 
$\mathscr K(\PP,\theta)\nabla\theta{\cdot}\nu+K\theta=K\theta_\flat(t)$
in \eqref{BC-2} and $\theta(0)=\theta_0$ in \eqref{IC} modify respectively 
as
\begin{subequations}\label{BC-IC+}\begin{align}
&\mathfrak{K}(\PP,\vartheta)\nabla  C_{\rm
  v}^{-1}(\vartheta)  {\cdot}\nu
+K C_{\rm v}^{-1}(\vartheta)=K\theta_{\flat\varepsilon}\!\!\!\!&&\text{with }\ \ \ 
\theta_{\flat\varepsilon}:=\frac{\theta_\flat}{1+\varepsilon\theta_\flat},
\\
&\vartheta(0)=\vartheta_{0\varepsilon}:=C_{\rm v}(\theta_{0\varepsilon})&&\text{with }\ \ \ \theta_{0\varepsilon}=\frac{\theta_0}{1+\varepsilon\theta_0}.
\end{align}\end{subequations}
%
%
\UUU In particular, 
$ \theta_{\flat,\varepsilon}$ and
$\theta_{0\varepsilon}$  in \eqref{BC-IC+} are bounded.\EEE 

As announced, we use a Galerkin approximation in space for \eqref{system+}.
(which, in its evolution variant, is sometimes referred to as 
{\it Faedo-Galerkin method}). For possible numerical implementation, one can 
imagine a conformal finite element formulation, with $h>0$ denoting
the {\it mesh size}.  Assume for simplicity  that  the sequence of
nested finite-dimensional subspaces $V_h \subset H^2(\Omega)$ invading
$H^1(\Omega)$  is  given. This will make all Laplacians defined in the 
usual strong sense even on the discrete level, allowing \UUU for \EEE
some simplification in the estimates. 
For simplicity,  we  assume that all initial conditions 
$(y_0,\PP_0,\vartheta_{0\varepsilon})$ belong to  
all finite-dimensional subspaces so that  no additional
approximation of such conditions is needed. 


The outcome of the Galerkin approximation is an 
an initial-value problem for a system of ordinary 
differential equations (ODEs). 
In \eqref{est-w} below, we denote $|\cdot|_{h}$  the  seminorm on 
$L^2(I;H^1(\Omega)^*)$ defined by
\begin{align}\label{seminorm}
|\xi|_{h}^*:=\sup \left\{\int_Q\xi v\,\d x\,\d t \ :\
\|v\|_{L^2(I;H^1(\Omega))}\le1,\ v(t)\in V_{h} \ \ \text{for a.e.} \
t \in I\right\}. 
\end{align}
Similar seminorms  (with the same notation) are defined on 
spaces tensor-valued functions. On $L^2$-spaces we let  
\begin{align}\label{seminorm+}
|\xi|_{h}:=\sup \left\{\int_Q\xi{:}v\,\d x\,\d t \ :\
\|v\|_{L^2(Q)^{d\times d}}\le1,\ v(t)\in V_{h}^{d\times d} \ \ \text{for a.e.} \
t \in I\right\}, 
\end{align}
to be used for \eqref{est-Delta-Pi} below. This family of seminorms makes 
the linear spaces $L^2(I;H^1(\Omega)^*)$ 
and $L^2(Q)^{d\times d}$ and $L^2(Q)$, 
metrizable locally convex spaces (Fr\'echet spaces). 
\def\eps{\varepsilon}
Henceforth, we use the symbol $C$ to indicate a
positive constant, possibly depending on data but independent \UUU of \EEE
regularization and discretization parameters. Dependences on such
parameters will be indicated in indices. Our stability result
reads as follows. 

\begin{proposition}[Discrete solution and a-priori estimates]
\label{prop-est}
Let  assumptions \eqref{ass} hold and $\eps,h>0$ be fixed. Then, the Galerkin
approximation of \eqref{system+} with the initial/boundary conditions
\eqref{BC}-\eqref{IC} modified by \eqref{BC-IC+}  admits a solution 
on the whole time interval $I=[0,T]$.  By denoting such solution
as 
$(y_{\eps h},\PP_{\eps h},\vartheta_{\eps h})$, \DELETE{we may find $h_0 >0$
such that} the following estimates hold     
\begin{subequations}\label{est}\begin{align}
&\big\|y_{\eps h}\big\|_{L^\infty(I;H^2
(\Omega)^d)\,\cap\,W^{1,\infty}(I;L^2(\Omega)^d)}^{}\le C,
\\[-.2em]\label{est-P}
&\big\|\PP_{\eps h}\big\|_{L^\infty(I;W^{1,q}(\Omega)^{d\times d})\,\cap\,H^1(I;L^
2(\Omega)^{d\times d})}^{}
\le C\ \ \ \text{ and }\ \ \ 
\Big\|\frac1{\det\PP_{\eps h}}\Big\|_{L^\infty(Q)}\le C,
\\[-.2em]&\label{est-theta-H1}
\big\|\vartheta_{\eps h}\big\|_{L^2(I;H^1
(\Omega))}\le C_\eps,
\\&\label{est-w}
\big|\DT\vartheta_{\eps h}\big|_{h_0}^*\le C_\eps\ \ \ \ \text{ for }\ h_0\ge h>0,
\\&
\big|{\rm div}(|\nabla\PP_{\eps h}|^{q-2}\nabla\PP_{\eps h})\big|_{h_0}^{}\le C
\ \ \ \ \text{ for }\ h_0\ge h>0,
\label{est-Delta-Pi}
\end{align}\end{subequations}
{ where  $C$ and $C_\eps$  are  some constant
  independent of $h$ and $h_0$,  $C $   being independent also of $\eps$.}
\end{proposition}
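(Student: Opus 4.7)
The plan is to combine a Carath\'eodory local-existence argument for the finite-dimensional Galerkin system with a chain of energy and comparison estimates. At fixed $\eps,h>0$ the Galerkin ansatz turns \eqref{system+} into an initial-value problem for an ODE system in the coefficients of $(y_{\eps h},\PP_{\eps h},\vartheta_{\eps h})$. The Yosida regularization \eqref{regularization-R} makes $\partial_\PR\mathfrak{R}_\eps$ continuously differentiable in $\PR$ and the regularized heat source in \eqref{system-heat+} bounded, the hardening term \eqref{ass-plast-large-HD-growth} acts as a barrier preventing $\det\PP_{\eps h}$ from touching zero in finite time, and \eqref{ass-cv}--\eqref{ass-M-K} ensure that the heat operator is uniformly parabolic. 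Together these provide a Carath\'eodory right-hand side and hence local-in-time solvability; global existence on $I$ will follow from the a priori estimates derived below.

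The main bound is the discrete mechanical energy balance, obtained by testing \eqref{momentum-eq+} by $\DT y_{\eps h}$ and \eqref{flow-rule-pi++} by $\DT\PP_{\eps h}$. Using the algebraic identity $AB{:}C=A{:}CB^\top$ as in \eqref{test-of-flow-rule}, the test of $\partial_\PR\mathfrak{R}_\eps(\theta;\DT\PP\PP^{-1})\PP^{-\top}$ by $\DT\PP$ equals $\partial_\PR\mathfrak{R}_\eps(\theta;\DT\PP\PP^{-1}){:}(\DT\PP\PP^{-1})\ge \tfrac12 a_\mathfrak{R}^{}|\DT\PP\PP^{-1}|^2$, by the strong monotonicity \eqref{ass-R-reg-monotone} together with $\partial_\PR\mathfrak{R}_\eps(\theta;0)=0$. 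The contribution of $\varSigma_{\rm in}{:}\DT\PP$ combines with $\varSigma_{\rm el}{:}\nabla\DT y$ to reproduce $\tfrac{\d}{\d t}\PsiM(\nabla y,\PP)$; a by-part integration in time of the boundary-load term $\int_\Gamma N y_\flat{\cdot}\DT y\,\d S$, as announced below \eqref{energy-conserv}, avoids any need to control $\nabla\DT y$. Gronwall's lemma then yields the $H^2$- and $W^{1,\infty}(I;L^2)$-bounds on $y_{\eps h}$, the bounds on $\PP_{\eps h}$ in $L^\infty(I;W^{1,q})$ and $H^1(I;L^2)$, and the $L^2(Q)$-bound on $\DT\PP_{\eps h}\PP_{\eps h}^{-1}$. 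The uniform bound on $1/\det\PP_{\eps h}$ then follows from the $L^1$-bound on $\FH(\PP_{\eps h})\ge\epsilon/(\det\PP_{\eps h})^r$ combined with the Morrey embedding $W^{1,q}\hookrightarrow C^{0,1-d/q}(\bar\Omega)$: by a Healey--Kr\"omer-type argument, a pointwise small value of $\det\PP_{\eps h}$ would, by H\"older continuity, render $1/(\det\PP_{\eps h})^r$ too large on a ball to be compatible with the $L^1$-bound, precisely under $r\ge qd/(q{-}d)$.

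The temperature estimate \eqref{est-theta-H1} is obtained by testing the Galerkin version of \eqref{system-heat+} by $\vartheta_{\eps h}$. The key points are that $\mathfrak{K}(\PP_{\eps h},\vartheta_{\eps h})$ is uniformly positive-definite -- this uses \eqref{ass-M-K} together with the lower bound on $\det\PP_{\eps h}$ and the $L^\infty$-bound on $\PP_{\eps h}$ coming from \eqref{est-P} -- and that the regularized dissipative heat source in \eqref{system-heat+} is (in an $\eps$-dependent way) bounded in $L^\infty(Q)$. Combined with the bounded initial and boundary data \eqref{BC-IC+}, this produces \eqref{est-theta-H1}. The dual bound \eqref{est-w} follows by comparison in \eqref{system-heat+}: for $v$ with $v(t)\in V_{h_0}\subset V_h$ (since $h\le h_0$ and the spaces are nested) and $\|v\|_{L^2(I;H^1)}\le 1$, writing $\int_Q\DT\vartheta_{\eps h}v\,\d x\,\d t$ as the sum of the conductive-flux, boundary and source contributions from the heat equation, each is controlled by $C_\eps$.

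Finally, \eqref{est-Delta-Pi} is derived by comparison in \eqref{flow-rule-pi++}: isolating $-\mathrm{div}(\kappa_1|\nabla\PP|^{q-2}\nabla\PP)$ and testing against $v\in V_{h_0}^{d\times d}$ with $\|v\|_{L^2(Q)}\le1$, each of the remaining summands -- $\partial_\PR\mathfrak{R}_\eps(\theta;\DT\PP\PP^{-1})\PP^{-\top}$, $\nabla y^\top\FE'(\nabla y\PP^{-1}){:}(\PP^{-1})'$, and $\FH'(\PP)$ -- is bounded in $L^2(Q)$ by the previous estimates independently of $\eps$ and $h$; the $\FE'$-term is handled through the growth condition \eqref{ass-FM} together with the $L^\infty(I;H^2)$-bound on $y_{\eps h}$ and the Sobolev embedding. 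The main obstacle of the whole argument is the uniform positive lower bound on $\det\PP_{\eps h}$, which is needed essentially everywhere after the mechanical energy estimate; all other bounds reduce to standard chain estimates once this is in place.
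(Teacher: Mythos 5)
Your proposal is correct and follows essentially the same route as the paper: ODE existence for the Galerkin system, the mechanical energy test with $\DT y_{\eps h}$ and $\DT\PP_{\eps h}$ with by-part integration of the boundary load, coercivity/monotonicity of $\mathfrak{R}_\eps$ for the $L^2$-bound on $\DT\PP_{\eps h}\PP_{\eps h}^{-1}$, a Healey--Kr\"omer-type argument (which the paper cites rather than re-derives) for the uniform lower bound on $\det\PP_{\eps h}$, the test of the regularized heat equation by $\vartheta_{\eps h}$ using positive definiteness of $\mathbb K$ and the $\eps$-dependent boundedness of data and heat source, and comparison arguments in the nested spaces $V_{h_0}\subset V_h$ for the seminorm estimates on $\DT\vartheta_{\eps h}$ and ${\rm div}(|\nabla\PP_{\eps h}|^{q-2}\nabla\PP_{\eps h})$. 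The only cosmetic differences (strong monotonicity at $R_2=0$ instead of coercivity, and uniform positive definiteness of $\mathfrak{K}$ instead of the factorization $\nabla\vartheta=\PP^\top(\Cof\PP)\nabla\vartheta/\det\PP$) are equivalent given the established bounds.
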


\begin{proof}[Sketch of the proof]
The existence of a global solution to the Galerkin approximation follows 
directly by the usual successive-continuation argument applied to the 
underlying system of ODEs. 

Let us now move to a priori estimation. We start by recovering the
mechanical energy balance, see \eqref{energy-conserv}. 
In particular,  we use 
$\DT y_{\eps h}$, $\DT\PP_{\eps h}$, and $\DT\zeta_{\eps h}$ as test functions into
each corresponding equation discretized by the Galerkin method. 
More specifically,  using $\DT y_{\eps h}$  as test in the
Galerkin approximation of  \eqref{momentum-eq+} with 
its boundary condition \eqref{BC-1}, we obtain
\begin{align}\nonumber
&\int_\Omega\!\frac\varrho2|\DT y_{\eps h}(t)|^2
+\frac{\kappa_0}2|\nabla^2y_{\eps h}(t)|^2\,\d x
+
\int_\Gamma\frac12N|y_{\eps h}(t)|^2\,\d S
+
\int_0^t\!\!\int_\Omega\! \partial_{ \nabla y } \widehatFM(\nabla y_{\eps h},\PP_{\eps h})
{:}\nabla\DT y_{\eps h}\,\d x\, \d t
\\
&
=\int_0^t\!\!\int_\Omega\!g(y){\cdot}\DT y_{\eps h}\,\d x\, \d t
+\int_0^t\!\!\int_\Gamma Ny_\flat{\cdot}\DT y_{\eps h}\,\d S\, \d t
+\int_\Omega\!\frac\varrho2|v_0|^2+\frac{\kappa_0}2|\nabla^2y_0|^2\,\d x
+\int_\Gamma\frac12N|y_0|^2\,\d S.
\label{test-of-moment}\end{align}
 By testing the Galerkin approximation of 
\eqref{flow-rule-pi++}  
by $\DT\PP_{\eps h}$  one gets 
\begin{align}\nonumber
&\int_\Omega\!\FH(\PP_{\eps h}(t))+
\frac{\kappa_1}q|\nabla\PP_{\eps h}(t)|^q\d x
+\int_0^t\!\!\int_\Omega\!\partial_{\PR}{\mathfrak{R}_\varepsilon^{}}\big(\theta_{\eps h};\DT\PP_{\eps h}\PP_{\eps h}^{-1}\big)
{:}\DT\PP_{\eps h}\PP_{\eps h}^{-1}
\\[-.3em]
&\qquad\qquad+
 \partial_\PP\widehatFM(\nabla y_{\eps h},\PP_{\eps h})
{:}\DT\PP_{\eps h}\d x\, \d t
=\int_\Omega\! \psi_{\rm H}(\PP_0) +  \frac{\kappa_1}q|\nabla\PP_0|^q\d x.
\label{test-of-flow-rule+}
\end{align}
Taking the sum of  
\eqref{test-of-moment}-\eqref{test-of-flow-rule+} and using the calculus 
\begin{align}\nonumber
&  \partial_{\nabla y} \widehatFM{:}\nabla\DT y_{\eps h}
+\partial_{\PP}\widehatFM{:}\DT\PP_{\eps h}
=\frac{\partial}{\partial t}\widehatFM(\nabla y_{\eps h},\PP_{\eps h}),
\end{align} 
we obtain the discrete analogue of \eqref{energy-conserv}. 

The boundary term in \eqref{test-of-moment} contains $\DT y$, which is not
well defined on $\Gamma$.  We overcome this obstruction by  
by-part integration  
\begin{align}\label{by-part-boundary}
&\int_0^t\!\int_\Gamma Ny_\flat{\cdot}\DT y_{\eps h}\,\d S\d
t
=
\int_\Gamma Ny_\flat(t){\cdot}y_{\eps h}(t)\,\d S
-\int_0^t\!\int_\Gamma N\DT y_\flat{\cdot}y_{\eps h}\,\d S\, \d t
-\int_\Gamma Ny_\flat(0){\cdot}y_0\,\d S 
\end{align}
so that this boundary term can be estimated by using the assumption 
\eqref{ass-load} on $y_\flat$. 

These estimates allow us to obtain the bounds   (\ref{est}a,b). More
in detail,  the first estimate in \eqref{est-P}  follows from  the 
coercivity \eqref{ass-R} of $\mathfrak{R}_\varepsilon^{}$ so that we have also 
that   $\DT\PP_{\eps h}^{_{}}\PP_{\eps h}^{-1}$  is bounded  in
$L^2(Q)^{d\times d}$.  In particular, we have here used the
boundary condition on the plastic strain \eqref{BC-2}, see also Remark 
\ref{lastremark}.

 Exploiting \eqref{ass-plast-large-HD-growth} we can use the 
Healey-Kr\"omer Theorem  \cite[Thm. 3.1]{HeaKro09IWSS} for the plastic strain 
instead of the deformation gradient. \UUU In particular, \cite[Thm. 3.1]{HeaKro09IWSS}
states that any function $u\in W^{2,p}(\Omega;\R^d)$ with ${\rm det}\,\nabla u>0$
such that $\int_\Omega|{\rm det}\,\nabla u|^{-q}\,\d x\leq K$ is
such that
$\min_{x\in\bar\Omega}{\rm det}\,\nabla u=:\epsilon>0$, provided 
$p>d$ and $q>pd/(p{-}d)$. In fact, this estimate holds uniformly with
respect to $u$, as $\epsilon$ depends on $K$ and data only. In fact,
by inspecting its proof, see also in
\cite{KruRou18MMCM,RouTom??TMPE}, one easily realizes that this result
holds 
for any matrix field, even if it does not come from a gradient of a
vector field. In
particular, one has that any $P\in W^{1,p}(\Omega;\R^{d\times d})$ with ${\rm det}\,P>0$
such that $\int_\Omega|{\rm det}\,P|^{-q}\,\d x\le K$ fulfills 
$\min_{x\in\bar\Omega}{\rm det}\,P=:\epsilon>0$. \EEE
This gives the second estimate in \eqref{est-P}. 
It is important that it is available even on the Galerkin level, so that
in fact the singularity of $\FH$ is not seen during the evolution and
the Lavrentiev phenomenon is excluded. 
Let us point out that, in the frame of our weak thermal coupling the 
assumption \eqref{ass-R}, these estimates  hold  independently of temperature, 
and thus the constants in (\ref{est}a,b) are independent of $\varepsilon$.

Let us now test the Galerkin approximation of the heat equation 
\eqref{system-heat+} by $\vartheta_{\eps h}$. 
This test is allowed at the level of Galerkin approximation, 
although it does not lead to the total energy balance. 
We obtain
\begin{align}
&\frac{\d}{\d t}\frac12\int_\Omega\!\vartheta_{\eps h}^2
\,\d x
+\int_\Omega\!
\mathfrak{K}(\PP_{\eps h},\vartheta_{\eps h})
\nabla\vartheta_{\eps h}{\cdot}\nabla\vartheta_{\eps h}\,\d x
+\int_\Gamma\! K\vartheta_{\eps h}^2\,\d S
=
\int_\Omega\! r_\eps\vartheta_{\eps h}\,\d x+\int_\Gamma\!
K\theta_{\flat\eps}\vartheta_{\eps h}\,\d S.
\label{heat-tested}
\end{align}
After integration over $[0,t]$, we use the Gronwall inequality 
and exploit the control of the initial condition 
$|\theta_{0\eps}|\le1/\eps$ due to the regularization \eqref{BC-IC+}. 
The last boundary term in \eqref{heat-tested}  can be controlled as  
$|\theta_{\flat,\eps}|\le1/\eps$, again due to the regularization \eqref{BC-IC+}. 
Using the positive definiteness of $\mathbb{K}$ in \eqref{ass-M-K} and 
recalling \eqref{M-pull-back+}, we get the bound 
$\|(\Cof\PP_{\eps h})\nabla\theta_{\eps h}/\sqrt{\det\PP_{\eps h}}\|_{L^2(Q)^d}\le
C_\eps$. Then also  \eqref{est-theta-H1} by using
\begin{align}\nonumber
\|\nabla\theta_{\eps h}\|_{L^2(Q)^d}
&=\Big\|\frac{\PP_{\eps h}^{\top}\Cof\PP_{\eps h}}{\det\PP_{\eps h}}\nabla\theta_{\eps h}\Big\|_{L^2(Q)^d}
\\&\le\Big\|\frac{\PP_{\eps h}^{}}{\sqrt{\det\PP_{\eps h}}}\Big\|_{L^\infty(Q)^{d\times d}}
\Big\|\frac{\Cof\PP_{\eps h}}{\sqrt{\det\PP_{\eps h}}}\nabla\theta_{\eps h}\Big\|_{L^2(Q)^d}\le C_\eps,
\end{align}
\UUU where the latter bound follows from \eqref{est-P}. \EEE

By comparison, we obtain the estimate \eqref{est-w} of $\DT\vartheta_{\eps h}$
in the seminorm \eqref{seminorm}.   Again by  comparison we obtain 
\eqref{est-Delta-Pi}, using 
\eqref{flow-rule-pi++} with \eqref{def-of-Sin} and taking advantage of the 
boundedness of the term
$\partial_\PR^{}\mathfrak{R}_\varepsilon^{}(\theta_{\eps h};\DT\PP_{\eps h}\PP_{\eps h}^{-1})\PP_{\eps h}^{-\top}$ in $L^2(Q)^{d\times d}$ and similarly also of 
the first term in \eqref{def-of-Sin}. More specifically,  the  term
\REPLACE{$\nabla y_{\eps h}^\top\FE'(\nabla y_{\eps h}\Cof'\PP_{\eps h}^\top){:}\Cof'\PP_{\eps h}^\top$,}{$\nabla y_{\eps h}^\top\FE'(\nabla y_{\eps h}\PP_{\eps h}^{-1}){:}(\PP_{\eps h}^{-1})'$} turns
out to be bounded in $L^2(Q)^{d\times d}$ because $\nabla y_{\eps h}^\top$ is 
bounded in $L^\infty(I;L^{2^*}(\Omega)^{d\times d})$
and $\FE'$ is bounded in
$L^\infty(I;L^{2^*2/(2^*-2)}(\Omega)^{d\times d})$ due to the growth condition 
\eqref{ass-FM}, 
and \REPLACE{$\Cof'\PP_{\eps h}^\top$}{$(\PP_{\eps h}^{-1})'$} is controlled in 
$L^\infty(Q)^{ d\times d\times d\times d}$. Here, we  emphasize that 
one cannot perform  on relation  \eqref{flow-rule-pi}  the nonlinear test by 
${\rm div}(|\nabla\PP_{\eps h}|^{q-2}\nabla\PP_{\eps h})$ to obtain the
estimate \eqref{est-Delta-Pi} in the full $L^2(Q)^{d\times d}$-norm.
%
\end{proof}

\begin{proposition}[Convergence of the Galerkin approximation for $h\to0$]\label{prop-conv1}
Let  assumptions  \eqref{ass} hold and let $\eps>0$ be fixed. 
Then, for $h\to0$, there  exists a not relabeled  subsequence of 
$\{(y_{\eps h},\PP_{\eps h},\vartheta_{\eps h})\}_{h>0}^{}$
converging weakly* in the topologies indicated in \eqref{est}{\rm a-g}
to some $(y_\eps,\PP_\eps,\vartheta_\eps)$.
Every such limit  triple  is a weak solution 
to the regularized problem \eqref{system+} with the initial/boundary conditions
\eqref{BC}-\eqref{IC} modified by \eqref{BC-IC+}. Moreover, the following
a-priori estimate holds
\begin{align}\label{est++}
&
\big\|{\rm div}(|\nabla\PP_{\eps}|^{q-2}\nabla\PP_{\eps})\big\|_{L^2(Q)^{d
    \times  d}}\le C.
\end{align}
 Furthermore,  the following strong convergences hold for $h\to0$
\begin{subequations}\label{strong-conv}
\begin{align}&&&\label{strong-conv-DT-Pi}
\DT\PP_{\eps h}^{}\PP_{\eps h}^{-1}\to\DT\PP_\eps^{}\PP_\eps^{-1}&&\text{strongly in }\ L^2(Q)^{d\times d},
\\&&&\label{strong-conv-Pi-e-h}
\nabla\PP_{\eps h}\to\nabla\PP_\eps&&\text{strongly in }\ L^q(Q)^{d\times d\times d}.&&&&
\end{align}\end{subequations}
\end{proposition}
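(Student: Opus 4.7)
The plan is to combine the a-priori bounds of Proposition~\ref{prop-est} with Aubin--Lions compactness to identify weak limits, and then to exploit the uniform monotonicity of $\partial_\PR\mathfrak{R}_\eps$ and of the $q$-Laplacian to upgrade to the strong convergences \eqref{strong-conv}. First, Banach--Alaoglu applied to \eqref{est} extracts a (not relabeled) subsequence converging weakly-$*$ in the announced topologies to some $(y_\eps,\PP_\eps,\vartheta_\eps)$. Since $H^2(\Omega)\subset\!\subset W^{1,p}(\Omega)$ for any $p<\infty$ when $d\le 3$ and $W^{1,q}(\Omega)\subset\!\subset C(\overline\Omega)$ when $q>d$, an Aubin--Lions argument yields $\nabla y_{\eps h}\to\nabla y_\eps$ in $C(I;L^p(\Omega)^{d\times d})$ and $\PP_{\eps h}\to\PP_\eps$ in $C(I;C(\overline\Omega)^{d\times d})$. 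The bound $\|1/\det\PP_{\eps h}\|_{L^\infty(Q)}\le C$ from \eqref{est-P} persists in the limit and propagates uniformly to $\PP_{\eps h}^{-1}$ and $(\PP_{\eps h}^{-1})'$. For the temperature, the estimates \eqref{est-theta-H1}--\eqref{est-w}, read against test functions in $\bigcup_h V_h$ (dense in $H^1(\Omega)$), yield strong $L^2(Q)$-convergence of $\vartheta_{\eps h}$ to $\vartheta_\eps$.

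With these compactness properties, I would pass to the limit termwise in the Galerkin counterparts of \eqref{momentum-eq+}--\eqref{system-heat+}. In \eqref{momentum-eq+} the nonlinear stress $\FE'(\nabla y_{\eps h}\PP_{\eps h}^{-1})\PP_{\eps h}^{-\top}$ converges in the dual of the test space thanks to the growth condition \eqref{ass-FM} combined with the strong convergence of $\nabla y_{\eps h}$ and the uniform convergence of $\PP_{\eps h}^{-1}$; $g(y_{\eps h})$ converges by continuity of $g$; the $\kappa_0$-term and boundary data pass by weak-$*$ convergence and trace continuity. In \eqref{flow-rule-pi++}, all terms but the two nonlinearities $\partial_\PR\mathfrak{R}_\eps(\vartheta_{\eps h};\DT\PP_{\eps h}\PP_{\eps h}^{-1})\PP_{\eps h}^{-\top}$ and $\mathrm{div}(|\nabla\PP_{\eps h}|^{q-2}\nabla\PP_{\eps h})$ pass directly; the latter two admit weak $L^2$ respectively $L^{q'}$ limits whose identification requires the monotonicity arguments described next. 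In \eqref{system-heat+}, $\mathfrak{K}(\PP_{\eps h},\vartheta_{\eps h})\nabla\vartheta_{\eps h}$ passes by strong $C(\overline Q)$-convergence of $\PP_{\eps h}$, continuity of $\mathfrak K$, and weak $L^2$-convergence of $\nabla\vartheta_{\eps h}$.

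The technical heart is the identification of the nonlinear limits together with the strong convergences \eqref{strong-conv}. For \eqref{strong-conv-Pi-e-h}, I would test the Galerkin flow rule by $\DT\PP_{\eps h}$ to derive an energy identity for $\int|\nabla\PP_{\eps h}|^q$, pass to the limit using the already-established convergences, and compare with the corresponding limit identity: by the uniform convexity of the $q$-power this forces $\nabla\PP_{\eps h}\to\nabla\PP_\eps$ in $L^q(Q)^{d\times d\times d}$, which in turn identifies the weak $L^{q'}$-limit as $|\nabla\PP_\eps|^{q-2}\nabla\PP_\eps$ by a standard Minty argument. For \eqref{strong-conv-DT-Pi}, an analogous comparison of energy identities (now after rewriting the flow rule in the product form \eqref{flow-rule-pi+} whose natural test is $\DT\PP\PP^{-1}$, up to the Jacobian correction handled by $\|1/\det\PP_{\eps h}\|_{L^\infty}\le C$) combined with the uniform monotonicity \eqref{ass-R-reg-monotone} of $\partial_\PR\mathfrak{R}_\eps$ controls $\int_Q|\DT\PP_{\eps h}\PP_{\eps h}^{-1}-\DT\PP_\eps\PP_\eps^{-1}|^2\,\d x\,\d t$. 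Once \eqref{strong-conv-DT-Pi} is at hand, the regularized heat-production rate in \eqref{system-heat+} converges strongly in $L^1(Q)$ (being a Lipschitz function of a strongly convergent argument multiplied against itself), which closes the limit passage in the heat equation.

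Estimate \eqref{est++} is finally recovered from \eqref{est-Delta-Pi}: the seminorm $|\cdot|_{h_0}$ is lower semicontinuous along weak $L^2(Q)$-convergence, while letting $h_0\to 0$ upgrades it to the full $L^2(Q)^{d\times d}$-norm by density of $\bigcup_{h_0}V_{h_0}$ in $H^1(\Omega)$. I anticipate the main obstacle to be the mismatch between the $\DT\PP$-testing imposed by the Galerkin discretization and the $\DT\PP\PP^{-1}$-structure of the dissipation: the uniform bi-Lipschitz change of variables $\DT\PP\leftrightarrow\DT\PP\PP^{-1}$ has to be controlled, which is granted by the Healey--Kr\"omer-type bound $1/\det\PP_{\eps h}\in L^\infty(Q)$ from \eqref{est-P}; without this uniform non-degeneracy, neither the identification of the limit of $\partial_\PR\mathfrak{R}_\eps(\cdot;\DT\PP_{\eps h}\PP_{\eps h}^{-1})$ nor the monotonicity derivation of \eqref{strong-conv-DT-Pi} would go through.
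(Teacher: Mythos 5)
Your overall strategy (Banach selection, Aubin--Lions compactness, then monotonicity of $\partial_\PR\mathfrak{R}_\eps$ and of the $q$-Laplacian to get \eqref{strong-conv}) is the same toolbox the paper uses, but the mechanism you propose for the strong convergences has a genuine gap. You want to derive \eqref{strong-conv-Pi-e-h} and \eqref{strong-conv-DT-Pi} by ``comparing the discrete energy identity with the corresponding limit identity'', and only afterwards identify the weak limits of $|\nabla\PP_{\eps h}|^{q-2}\nabla\PP_{\eps h}$ and $\partial_\PR\mathfrak{R}_\eps(\theta_{\eps h};\DT\PP_{\eps h}\PP_{\eps h}^{-1})$ ``by a standard Minty argument''. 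As written this is circular: the limit energy identity is obtained by testing the \emph{limit} flow rule by $\DT\PP_\eps$ (via the Brezis chain rule, which itself needs \eqref{est++}), but that limit flow rule is only available once the two nonlinear weak limits have been identified --- which is exactly what you are trying to prove. The paper breaks this chicken-and-egg by never invoking the limit equation at this stage: it tests the Galerkin identity \eqref{weak-form-P-disc} by the differences $\PP_{\eps h}-\tilde\PP_h$ (for the gradient) and $\DT\PP_{\eps h}-\DT{\tilde\PP}_h$ (for the rate), where $\tilde\PP_h$ are \emph{finite-dimensional strong approximants of the limit} $\PP_\eps$; this device is forced by the fact that the discrete equation only admits $V_h$-valued test functions (one cannot test with $\PP_\eps$ or $\DT\PP_\eps$ themselves), and it makes the monotonicity differences in \eqref{strong-conv-grad-Pi} and \eqref{large-plast-strong-conv} tend to zero using only the already available weak/strong convergences, the weak lower semicontinuity step \eqref{calculus-Pi}, and the chain rule \eqref{calculus-Pi+} for the limit functional (no limit equation needed). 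Your sketch never mentions these approximants, and without them --- or without reordering your argument so that the limsup inequality and the Minty identification come \emph{before} any use of a limit energy identity --- the step ``compare with the limit identity'' does not go through.

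A second, smaller point concerns \eqref{est++}: you invoke ``lower semicontinuity of the seminorm along weak $L^2(Q)$-convergence'' of the divergences, but at the discrete level ${\rm div}(|\nabla\PP_{\eps h}|^{q-2}\nabla\PP_{\eps h})$ is \emph{not} known to be bounded in $L^2(Q)^{d\times d}$; only the seminorms \eqref{est-Delta-Pi} are controlled, so there is no weak $L^2$-convergent sequence to start from. The paper handles this by a Hahn--Banach extension of the functionals bounded in \eqref{est-Delta-Pi}, extracting a weak $L^2$ limit of the extensions and identifying it (using the strong convergence \eqref{strong-conv-Pi-e-h}) as ${\rm div}(|\nabla\PP_\eps|^{q-2}\nabla\PP_\eps)$; your density-in-$h_0$ idea works only after this extension/identification step, so it should be added. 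The remaining parts of your plan --- the compactness statements for $y_{\eps h}$, $\PP_{\eps h}$, $\vartheta_{\eps h}$ (including the discrete-seminorm Aubin--Lions argument for $\vartheta_{\eps h}$), the termwise limit passage in \eqref{momentum-eq+} and \eqref{system-heat+}, the role of the uniform bound on $1/\det\PP_{\eps h}$ in switching between $\DT\PP$ and $\DT\PP\PP^{-1}$, and the strong $L^1$-convergence of the regularized heat-production rate --- agree with the paper's proof.
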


\begin{proof}
The existence of weakly* converging  not relabeled 
subsequences  follows by the classical 
 Banach selection principle.
Let us  indicate one such weak* limit by   
$(y_\eps,\PP_\eps,\vartheta_\eps)$  and  prove that it solves the regularized
problem \eqref{system+}.
Note that, estimates \eqref{est++} follow from 
\UUU \eqref{est-Delta-Pi}\EEE, which are independent of $h$ and $h_0$, 
cf.\ \cite[Sect.~8.4]{Roub13NPDE} for this technique. 
\UUU More in detail, one can consider a Hahn-Banach extension of the
linear bounded functional occuring in \eqref{est-Delta-Pi} from
the linear subspace of $L^2(Q)^{d\times d}$ as in the definition \eqref{seminorm+} 
of the seminorm $|\cdot|_h$ to the whole space $L^2(Q)^{d\times d}$. This 
extension is bounded, sharing the same bound $C$ as in \eqref{est-Delta-Pi}.
Selecting, for a moment, another subsequence of these extensions which
converges weakly in $L^2(Q)^{d\times d}$, one can eventually identify the limit
again as ${\rm div}(|\nabla\PP_{\eps}|^{q-2}\nabla\PP_{\eps})\in L^2(Q)^{d\times d}$ and
see that, in fact, the whole originally selected subsequence converges
as well. \EEE

In order to check that weak* limits are solutions, we are called to prove 
convergence of the dissipation-rate term, i.e.\ the heat-production rate, in the
heat-transfer equation.  This in turn requires that we prove the strong 
convergence of $ \PP_{\eps h}$,  i.e.\ (\ref{strong-conv}). To this aim, let
$\tilde\PP_{h}$ be elements of the finite-dimensional subspaces which are 
approximating $\PP_\eps$ \ with respect to strong $L^2$ topologies along with the 
corresponding time derivatives.  Such approximants can be constructed by 
projections at the level of time derivatives. \UUU In particular, one
can ask that
$\tilde \PP_h \to \PP_\eps $ strongly in $H^1(0,T;L^2(\Omega)^{d\times
d}) \cap L^1(0,T;W^{1,q}(\Omega)^{d\times d})$.
\EEE


As for the strong convergence of $\nabla\PP_{\eps h}$, we exploit the uniform
monotonicity of the $q$-Laplacian.  The   Galerkin identity 
related to \eqref{flow-rule-pi++} 
\begin{align}\nonumber
\int_Q 
\nabla y_{\eps h}^\top\FE'(F_{{\rm el},\eps h}){:}
(\PP_{\eps h}^{-1})'{:}\tilde\PP
+
\partial_\PR^{}\mathfrak{R}_\varepsilon^{}(\theta_{\eps h};\DT\PP_{\eps h}\PP_{\eps h}^{-1})
{:}(\tilde\PP\PP_{\eps h}^{-1})\qquad
\\[-.5em]
+
\kappa_1|\nabla\PP_{\eps h}|^{q-2}\nabla\PP_{\eps h}\Vdots\nabla\tilde\PP\,\d x\,\d t
=0
\label{weak-form-P-disc}
\end{align}
 will  be used here for $\tilde\PP:=\PP_{\eps
  h}{-}\tilde\PP_h^{}$  where 
${\tilde\PP}_{h}\to\PP_\eps$  strongly in $H^1(I;L^2(\Omega)^{d\times d})$.
For some constant $c_{d,q}^{}>0$, cf.\  \cite[Lemma I.4.4]{DiBenedetto},  
this allows for  estimating as follows 
\begin{align}\nonumber
&\lim_{h\to0}
c_{d,q}^{}\|\nabla\PP_{\eps h}{-}\nabla\UUU\PP_\eps^{}\EEE\|_{L^q(Q)^{d\times d\times d}}^q
\\&\nonumber\
\le\lim_{h\to0}\int_Q\big(|\nabla\PP_{\eps h}|^{q-2}\nabla\PP_{\eps h}
-|\nabla\PP_{\eps}|^{q-2}\nabla\PP_{\eps}\big)
\Vdots\nabla\big(\PP_{\eps h}-\PP_{\eps}\big)\,\d x\, \d t
\\&\nonumber\ =\lim_{h\to0}\frac1{\kappa_1}\int_Q
\nabla y_{\eps h}^\top\FE'(F_{{\rm el},\eps h}){:}
(\PP_{\eps h}^{-1})'{:}(\PP_{\eps h}{-}\tilde\PP_h^{})
\\[-.5em]&\nonumber\qquad\qquad\qquad
+\partial_\PR^{}\mathfrak{R}_\varepsilon^{}(\theta_{\eps h};\DT\PP_{\eps h}\PP_{\eps h}^{-1})
{:}((\PP_{\eps h}{-}\tilde\PP_h^{})\PP_{\eps h}^{-1})
+\FH'(\PP_{\eps h}){:}(\PP_{\eps h}{-}\tilde\PP_h^{})
\\&\label{strong-conv-grad-Pi}\qquad\qquad\qquad
+|\nabla\PP_{\eps h}|^{q-2}\nabla\PP_{\eps h}\UUU \Vdots \nabla
    (\tilde\PP_h^{}{-}\PP_{\eps}) \EEE
-
|\nabla\PP_{\eps}|^{q-2}\nabla\PP_{\eps}
\Vdots\nabla\big(\PP_{\eps h}-\PP_{\eps}\big)\,\d x\, \d t=0,
\end{align}
where we used $\PP_{\eps h}{-}\tilde\PP_h^{}\to0$ strongly in 
$L^2(Q)^{d\times d}$ due to our estimates \eqref{est-P} \UUU and classical 
Aubin-Lions compact-embedding theorem\EEE.
Moreover,  
$\nabla y_{\eps h}^\top$ is bounded in $L^\infty(I; L^{2^*}\!(\Omega)^{d\times d})$ 
and $\FE'(F_{{\rm el},\eps h})$ is bounded in 
$ L^\infty(I; L^{{2^*2}/(2^*{-}2)}(\Omega)^{d\times d})$  due to the growth 
restriction \eqref{ass-FM}, so that $\nabla y_{\eps h}^\top\FE'(F_{{\rm el},\eps h})$
is bounded in  $L^2(Q)^{d\times d}$.  This allows to pass to the
limit in the 
\UUU term which contains \EEE 
$\nabla y_{\eps h}^\top\FE'(F_{{\rm el},\eps h})$
\UUU and similarly also in the
$\partial_\PR^{}\mathfrak{R}_\varepsilon^{}$-term, by taking into
account that   $\partial_\PR^{}\mathfrak{R}_\varepsilon^{}
(\theta_{\eps h};\DT\PP_{\eps h}\PP_{\eps h}^{-1})$ is bounded in $L^2(Q)^{d\times d}$\EEE.
As for the last term, note that  
$\nabla\PP_{\eps h}\to\nabla\PP_{\eps}$ weakly in $L^q(Q)^{d\times d}$  while
$|\nabla\PP_{\eps}|^{q-2}\nabla\PP_{\eps}\in L^{q'}(Q)^{d\times d}$ is fixed.
Thus \eqref{strong-conv-Pi-e-h} is proved.  From this,
we can also obtain the strong convergence of the $q$-Laplacian
of $\PP_{\eps h}$ in $L^{q'}(I;(W^{1,q}(\Omega)^{d\times d})^*)$, and thus, due to 
the bound \eqref{est-Delta-Pi}, also
\begin{align}\label{q-Laplace-weakly}
{\rm div}(|\nabla\PP_{\eps h}|^{q-2}\nabla\PP_{\eps h})\to 
{\rm div}(|\nabla\PP_{\eps}|^{q-2}\nabla\PP_{\eps})
\ \ \ \text{ weakly in }\ L^2(Q)^{d\times d}.
\end{align}
\DELETE{Here actually we need a bit subtle argument, considering a Hahn-Banach 
extensions on the whole space with the same bound of their norms as in 
\eqref{est-Delta-Pi}.\COMMENT{OK??}}

As for the strong convergence of $\DT\PP_{\eps h}$ 
(or rather of $\DT\PP_{\eps h}^{}\PP_{\eps h}^{-1}$ which occurs in 
the dissipation rate in 
\eqref{system-heat+}), we use  the strong monotonicity \eqref{ass-R-reg} of 
$\partial_\PR^{}\mathfrak{R}_\varepsilon^{}(\theta;\cdot)$ and again
\eqref{weak-form-P-disc} but now 
\UUU with the test function \EEE $\tilde\PP=\DT\PP_{\eps h}-\DT{\tilde\PP}_{h}$. 
%
Taking $a_\mathfrak{R}^{}>0$ from the uniform monotonicity assumption
\eqref{ass-R-monotone}, in view of \eqref{ass-R-reg-monotone},  we can estimate 
\begin{align}\nonumber
&\limsup_{h\to0}
\frac12a_\mathfrak{R}^{}
\big\|\DT\PP_{\eps h}\PP_{\eps h}^{-1}-\DT\PP_{\eps}\PP_{\eps}^{-1}\big\|_{L^2(Q)^{d\times d}}^2
\\[-.0em]&\nonumber\quad\le\limsup_{h\to0}
\int_Q\!\big(
\partial_\PR^{}\mathfrak{R}_\varepsilon^{}(\theta_{\eps h};\DT\PP_{\eps h}\PP_{\eps h}^{-1})-\partial_\PR^{}\mathfrak{R}_\varepsilon^{}(\theta_{\eps h};
\DT\PP_{\eps}\PP_{\eps}^{-1})\big){:}
(\DT\PP_{\eps h}\PP_{\eps h}^{-1}{-}
\DT\PP_{\eps}\PP_{\eps}^{-1})\,\d x\, \d t
\\[-.0em]&\nonumber\quad=\limsup_{h\to0}
\int_Q
\partial_\PR^{}\mathfrak{R}_\varepsilon^{}(\theta_{\eps h};\DT\PP_{\eps h}\PP_{\eps h}^{-1})
{:}(\DT\PP_{\eps h}\PP_{\eps h}^{-1}-
\DT{\tilde\PP}_{h}\PP_{\eps h}^{-1}
)\,\d x\, \d t
\\[-.3em]&\nonumber\qquad\qquad+\lim_{h\to0}
\int_Q
\partial_\PR^{}\mathfrak{R}_\varepsilon^{}(\theta_{\eps h};\DT\PP_{\eps h}\PP_{\eps h}^{-1}){:}
(\DT{\tilde\PP}_{h}\PP_{\eps h}^{-1}-\DT\PP_{\eps}\PP_{\eps}^{-1})
\,\d x\, \d t
\\[-.3em]&\nonumber\qquad\qquad\qquad
-\lim_{h\to0}\int_Q\partial_\PR^{}\mathfrak{R}_\varepsilon^{}(\theta_{\eps h};
\DT\PP_{\eps}\PP_{\eps}^{-1}){:}
(\DT\PP_{\eps h}\PP_{\eps h}^{-1}-
\DT\PP_{\eps}\PP_{\eps}^{-1})\,\d x\d t
\\[-.3em]&\nonumber\quad
=\lim_{h\to0}\int_Q\nabla y_{\eps h}^\top
 \FE'  \big(F_{{\rm el},\eps h}\big){:}
(\PP_{\eps h}^{-1})'{:}(
\DT{\tilde\PP}_{h}-\DT\PP_{\eps h})
+\FH'(\PP_{\eps h}^{}){:}(\DT{\tilde\PP}_{h}-\DT\PP_{\eps h})
\,\d x\d t
\\[-.5em]&\nonumber\qquad\qquad
+\limsup_{h\to0}\int_Q\kappa_1|\nabla\PP_{\eps h}|^{q-2}\nabla\PP_{\eps h}\Vdots
\nabla(
\DT{\tilde\PP}_{h}-\DT\PP_{\eps h})\,\d x\d t
\\[-.3em]&\nonumber\quad
=\lim_{h\to0}\int_Q  \nabla y_{\eps h}^\top
 \FE'\big(F_{{\rm el},\eps h}\big){:}
(\PP_{\eps h}^{-1})'{:}(
\DT{\tilde\PP}_{h}-\DT\PP_{\eps h})
+\FH'(\PP_{\eps h}^{}){:}(\DT{\tilde\PP}_{h}-\DT\PP_{\eps h})
\\[-.5em]&\nonumber\qquad\qquad
+\kappa_1{\rm div}(|\nabla\PP_{\eps h}|^{q-2}\nabla\PP_{\eps h})
{:}
\DT{\tilde\PP}_{h}
\,\d x\d t 
+\limsup_{h\to0}\int_\Omega\frac{\kappa_1}q|\nabla\PP_0|^q-\frac{\kappa_1}q|\nabla\PP_{\eps h}(T)|^q\,\d x
\\[-.3em]&\quad\le
-\int_Q\kappa_1
{\rm div}(|\nabla\PP_{\eps}|^{q-2}\nabla\PP_{\eps}){:}\DT\PP_{\eps}
\,\d x\, \d t 
+\int_\Omega\frac{\kappa_1}q|\nabla\PP_0|^q-\frac{\kappa_1}q|\nabla\PP_{\eps}(T)|^q\,\d x=0,
\label{large-plast-strong-conv}
\end{align}
where we used that $\nabla\DT\PP_{\eps h}$ is well defined  at the
level of Galerkin approximations  (although not in the limit) and
we also used the fact that
\begin{align}\nonumber
&\liminf_{h\to0}\int_Q|\nabla\PP_{\eps h}|^{q-2}\nabla\PP_{\eps h}\Vdots
\nabla(\DT\PP_{\eps h}{-}\DT{\tilde\PP}_{h})\,\d x\, \d t
\\&\nonumber
=\liminf_{h\to0}\int_\Omega\frac1q|\nabla\PP_{\eps h}(T)|^q
-\frac1q|\nabla\PP_0|^q\,\d x
+\lim_{h\to0}\int_Q\!
{\rm div}(|\nabla\PP_{\eps h}|^{q-2}\nabla\PP_{\eps h}){:}\DT{\tilde\PP}_{h}\,\d x\, \d t
\\&\ge\int_\Omega\frac1q|\nabla\PP_{\eps}(T)|^q-\frac1q|\nabla\PP_0|^q\,\d x
+\int_Q\!
{\rm div}(|\nabla\PP_{\eps}|^{q-2}\nabla\PP_{\eps}){:}\DT\PP_{\eps}
\,\d x\, \d t
\label{calculus-Pi}
\end{align}
 as well as  the Green formula combined with the by-part integration in time:\COMMENT{ THE FIRST INTEGRAL CAN BE OMITTED AS I SIMPLIFIED \eqref{calculus-Pi}, OK?}
\begin{align}
\DELETE{\int_Q\big(
{\rm div}(|\nabla\PP_\eps|^{q-2}\nabla\PP_\eps)
\PP_\eps^\top\big){:}(\DT\PP_\eps\PP_\eps^{-1})\,\d x\, \d t
=}\int_Q{\rm div}(|\nabla\PP_\eps|^{q-2}\nabla\PP_\eps)
{:}\DT\PP_\eps\,\d x\, \d t
=\int_\Omega
\frac1q|\nabla\PP_0|^q-\frac1q|\nabla\PP_\eps(T)|^q\,\d x.
\label{calculus-Pi+}
\end{align}
Note that the last term above makes sense as $t\mapsto\nabla\PP_\eps(t)$ is 
weakly continuous to $L^q(\Omega)^{d\times d}$ due to
\eqref{est-P}.  The other integrals in \eqref{calculus-Pi} are
also well-defined due to  estimate \eqref{est-Delta-Pi}. 
Note also that we used \eqref{q-Laplace-weakly} here. 
Moreover, it is possible to show that $-{\rm div}(|\nabla\PP|^{q-2}\nabla\PP)$ 
is indeed the subdifferential of the potential 
$\PP\mapsto\int_\Omega\frac1q|\nabla\PP|^q\,\d x$  in  
$L^2(\Omega)^{d\times d}$. In particular, the chain rule in  
\eqref{calculus-Pi+}  
holds true, see \cite{Brez73OMMS}. This proves convergence   
\eqref{strong-conv-DT-Pi}.

The convergence of the mechanical part for $h\to0$ is  now
straightforward. As   the 
highest-order term in \eqref{momentum-eq+} is linear, weak convergence together 
and  Aubin-Lions compactness for lower-order terms  suffices. The limit 
passage in the quasilinear $q$-Laplacian as well as in the 
$\mathfrak{R}_\varepsilon^{}$-term in \eqref{flow-rule-pi++} follow from the 
already proved strong convergences \eqref{strong-conv}. 

 Eventually,  the limit passage in the semilinear heat-transfer equation 
\eqref{system-heat+}  can be ascertained due 
to the already proved strong convergences  (\ref{strong-conv}a,b),
 allowing indeed the passage to the limit in the (regularized) right-hand side. 
\end{proof}

 In order to remove the regularization by passing to the   limit for
$\eps\to0$, we cannot  directly  rely on the estimates
\eqref{est-theta-H1}-\eqref{est-w}, 
which are dependent on  $\eps>0$.
On the other hand,
having already  passed to the limit in $h$ we are now in the
position of performing a number of nonlinear tests for the heat equation, which are
specifically tailored to the $L^1$-theory. 

\begin{lemma}[Further a-priori estimates for
  temperature]\label{lem-est+}
 Let $\vartheta_\eps$ be the (rescaled) temperature component of
the weak solution to the regularized problem \eqref{system+}, whose
existence is proved  in 
Proposition~{\rm \ref{prop-conv1}}.  Then, 
\begin{equation}
  \label{positivity}
  \vartheta_\eps \geq 0 \quad \text{a.e. in} \ Q.
\end{equation}
 Moreover, one has that 
\begin{subequations}\label{est+}\begin{align}\label{est-theta-L1}
&\exists C_1>0: \quad \|\vartheta_\eps\|_{L^\infty(I;L^1(\Omega))}\le C_1,
\\&
\forall 1\le s<(d{+}2)/(d{+}1) \ \exists C_s >0 : \quad
    \|\nabla\vartheta_\eps\|_{L^s(Q)^d}\le C_s
\label{est-theta}
\end{align}\end{subequations}
where the constants $C_1, \, C_s$ are independent of $\eps$.
\end{lemma}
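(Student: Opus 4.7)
For positivity \eqref{positivity}, I would test \eqref{system-heat+} with a smooth approximation of the negative part $(\vartheta_\eps)^-$. I rely throughout on the uniform positive definiteness of $\mathfrak{K}(\PP_\eps,\vartheta_\eps)$ on $Q$, which follows from \eqref{est-P} ($\Cof\PP_\eps$ is bounded in $L^\infty$ and $1/\det\PP_\eps$ is bounded in $L^\infty$, using $q>d$), from the positive definiteness of $\Frakk$ in \eqref{ass-M-K}, and from the positive infimum of $c_v$ in \eqref{ass-cv}. The bulk heat-production rate in \eqref{system-heat+} is nonnegative; the boundary contribution $K C_v^{-1}(\vartheta_\eps)(\vartheta_\eps)^-$ has the favourable sign because $C_v$ is strictly increasing with $C_v(0)=0$ and $\theta_{\flat\eps}\ge 0$; moreover $\vartheta_{0\eps}=C_v(\theta_{0\eps})\ge 0$ by \eqref{ass-IC}. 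A Gr\"onwall argument on $\tfrac12\|(\vartheta_\eps)^-(t)\|_{L^2(\Omega)}^2$ then forces $(\vartheta_\eps)^-\equiv 0$.

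For the bound \eqref{est-theta-L1}, I would integrate \eqref{system-heat+} over $\Omega$, using the Neumann-type boundary condition in \eqref{BC-IC+}. Since $\vartheta_\eps\ge 0$, the term $\int_\Gamma K C_v^{-1}(\vartheta_\eps)\,\d S$ is nonnegative and can be dropped. The remaining right-hand side is uniformly controlled: the regularised heat-production rate is bounded in $L^1(Q)$ because its unregularised counterpart $\partial_\PR\mathfrak{R}_\eps(\theta;\DT\PP\PP^{-1}){:}(\DT\PP\PP^{-1})$ is already bounded in $L^1(Q)$ via the mechanical energy identity \eqref{test-of-flow-rule+}, and the factor $1/(1{+}\eps|\cdot|^2)\le 1$ only helps; the boundary flux satisfies $K\theta_{\flat\eps}\le K\theta_\flat\in L^1(\Sigma)$; and the initial mass satisfies $\int_\Omega \vartheta_{0\eps}\,\d x\le\|c_v\|_\infty\|\theta_0\|_{L^1(\Omega)}$. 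Integrating in time gives \eqref{est-theta-L1}.

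For \eqref{est-theta}, I would use the Boccardo--Gallou\"et technique. For $\lambda\in(0,1)$, I test \eqref{system-heat+} with $\chi_\lambda(\vartheta_\eps):=1-(1{+}\vartheta_\eps)^{-\lambda}$, whose derivative is $\lambda(1{+}\vartheta_\eps)^{-\lambda-1}$. Since $0\le\chi_\lambda\le 1$, the right-hand side is controlled exactly as in the previous step, and the chain-rule for the primitive of $\chi_\lambda$ together with the uniform positive definiteness of $\mathfrak{K}(\PP_\eps,\vartheta_\eps)$ yields
\[
\int_Q \frac{|\nabla\vartheta_\eps|^2}{(1+\vartheta_\eps)^{\lambda+1}}\,\d x\,\d t \le C_\lambda
\]
uniformly in $\eps$. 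A H\"older inequality with conjugate exponents $2/s$ and $2/(2{-}s)$ then produces
\[
\int_Q|\nabla\vartheta_\eps|^s\,\d x\,\d t \le C_\lambda^{s/2}\Big(\int_Q (1+\vartheta_\eps)^{s(\lambda+1)/(2-s)}\,\d x\,\d t\Big)^{(2-s)/2}.
\]
The second factor is controlled by interpolating \eqref{est-theta-L1} against the bound $(1+\vartheta_\eps)^{(1-\lambda)/2}\in L^2(I;H^1(\Omega))$ (a reformulation of the weighted gradient bound above) and using the Sobolev embedding $H^1(\Omega)\hookrightarrow L^{2d/(d-2)}(\Omega)$. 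Standard bookkeeping of exponents, optimising over $\lambda\in(0,1)$, then shows that the product stays finite precisely in the range $s<(d{+}2)/(d{+}1)$, which proves \eqref{est-theta}.

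The main obstacle I anticipate is technical: the nonlinear tests $(\vartheta_\eps)^-$ and $\chi_\lambda(\vartheta_\eps)$ are not themselves in the finite-dimensional Galerkin subspaces and must be justified at the level of the limit $\vartheta_\eps$ obtained in Proposition~\ref{prop-conv1}. This is legitimate because the $\eps$-dependent bounds \eqref{est-theta-H1}--\eqref{est-w} grant $\vartheta_\eps\in L^2(I;H^1(\Omega))$ with $\DT\vartheta_\eps$ in a negative-order space, which supports the chain rule and a standard mollification-in-time of $\vartheta_\eps$. The other delicate point is the exponent chasing in the final interpolation step, where only a careful balancing between $L^\infty(I;L^1(\Omega))$ and the $L^2(I;H^1(\Omega))$-control of $(1+\vartheta_\eps)^{(1-\lambda)/2}$ yields the sharp threshold $(d{+}2)/(d{+}1)$.
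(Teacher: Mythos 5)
Your proposal is correct and follows essentially the paper's own route: the same nonlinear tests (the negative part $\min(0,\vartheta_\eps)$ for \eqref{positivity}, the constant $1$ for \eqref{est-theta-L1}, and $\chi_\omega(\vartheta_\eps)=1-(1{+}\vartheta_\eps)^{-\omega}$ for the weighted gradient bound), performed after the limit $h\to0$ exactly because they are unavailable at the Galerkin level, in the Boccardo--Gallou\"et strategy in its Feireisl--M\'alek variant, with the uniform bounds \eqref{est-P} on $\PP_\eps$ replacing the explicit $\Cof\PP_\eps/\sqrt{\det\PP_\eps}$ weighting the paper carries along. The only deviation is the final exponent bookkeeping, where you control the second H\"older factor by parabolic interpolation of $(1{+}\vartheta_\eps)^{(1-\lambda)/2}$ in $L^2(I;H^1(\Omega))\cap L^\infty(I;L^{2/(1-\lambda)}(\Omega))$, while the paper applies Gagliardo--Nirenberg to $1{+}\vartheta_\eps$ between $L^1(\Omega)$ and $W^{1,s}(\Omega)$ and absorbs the resulting sublinear power of $\|\nabla\vartheta_\eps\|_{L^s(Q)^d}$ by a small-$\delta$ Young inequality; both variants yield the same threshold $s<(d{+}2)/(d{+}1)$.
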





\begin{proof}
 The nonnegativity \eqref{positivity} is readily obtained by
testing  the heat equation in the regularized 
enthalpy form \eqref{system-heat+}
 \UUU by \EEE  $\min(0,\vartheta_\eps)$,
and using the assumptions that $\theta_\flat\ge0$ and $\vartheta_0\ge0$, cf.\ 
(\ref{ass}k,l).  
Note that the normalization  $C_{\rm v}(0)=0$ for the primitive 
function $C_{\rm v}$ of $c_{\rm v}$ in \eqref{def-of-vartheta} is here used. 
The nonnegativity \eqref{positivity} allows us to read the bound 
\eqref{est-theta-L1} from the test of the heat equation by 1.

We now perform a second {\it nonlinear} test in order to gain an
estimate on $\nabla\vartheta$ independent of $\eps$.
We follow the classical \cite{BocGal89NEPE} in the simplified variant developed 
in \cite{FeiMal06NSET}. This calls for testing the heat equation
\eqref{system-heat+} on  $\chi_\omega(\vartheta_{\eps})$ where the increasing 
concave function $\chi_\omega:[0,+\infty)\to[0,1]$  is defined as 
$\chi_\omega(w):=1-1/(1{+}w)^\omega$ for some  small $\omega>0$ to be chosen 
later.     
 By using the nonnegativity of $C_{\rm v}$ and the fact that 
$0\le\chi_\omega(\vartheta_\eps)\le1$, 
$\chi_\omega'(w)=\omega/(1{+}w)^{1+\omega}$, 
$r_\eps\le r$,
and $\theta_\flat/(1{+}\eps\theta_\flat)\le\theta_\flat$,  we
obtain  the estimate
\begin{align}\nonumber
&\omega\int_Q\frac{a_{\mathbb K}^{}}{(1{+}\vartheta_\eps)^{1+\omega}}
\Big|\frac{\Cof\PP_\eps}{\sqrt{\det\PP_\eps}}\nabla\vartheta_\eps\Big|^2
\,\d x\, \d t
\le
\int_Q\mathscr{K}(\PP_\eps,\vartheta_\eps)\nabla\vartheta_\eps\cdot\nabla\chi_\omega(\vartheta_\eps)\,\d x\, \d t
\\\nonumber&\qquad\qquad\qquad
\le
\int_Q\mathscr{K}(\PP_\eps,\vartheta_\eps)\nabla\vartheta_\eps\cdot\nabla\chi_\omega(\vartheta_\eps)\,\d x\, \d t
+\int_\Sigma K\vartheta_\eps\chi_\omega(\vartheta_\eps)\,\d S\, \d t
\\\nonumber
&\qquad\qquad\qquad\le \int_Q r_\eps\,\d x\, \d t
+\int_\Sigma K\frac{\theta_\flat}{1{+}\eps\theta_\flat}\d S\, \d t
+\int_\Omega C_{\rm v}(\vartheta_{0,\eps})\,\d x
\\
&\qquad\qquad\qquad\le \int_Q r\,\d x\, \d t
+\int_\Sigma K\theta_\flat\d S\, \d t
+\int_\Omega C_{\rm v}(\theta_0)\,\d x,
\label{est-of-cofPi-grad-theta}
\end{align}
where $a_{\mathbb K}^{}>0$  stands for  the positive-definiteness constant of 
${\mathbb K}$, cf.\ \eqref{ass-M-K}. By the H\"older inequality, we
 have that  
\begin{align}\nonumber
&\int_Q\Big|
\frac{\Cof\PP_\eps}{\sqrt{\det\PP_\eps}}\nabla\vartheta_\eps\Big|^s\,\d x\, \d t
=\int_Q(1{+}\vartheta_\eps)^{(1+\omega)s/2}
\frac{|(\Cof\PP_\eps)\nabla\vartheta_\eps|^s}
{(\det\PP_\eps)^{s/2}(1{+}\vartheta_\eps)^{(1+\omega)s/2}}\,\d x\, \d t
\\&\nonumber
\hspace{16mm}\le\bigg(\int_Q
(1{+}\vartheta_\eps)^{(1+\omega)s/(2-s)}\,\d x\, \d t\bigg)^{1-s/2}
\bigg(\int_Q\frac{|(\Cof\PP_\eps)\nabla\vartheta_\eps|^2}{\det\PP_\eps(1{+}\vartheta_\eps)^{1+\omega}}
\,\d x\, \d t\bigg)^{s/2}
\end{align}
 and the  last factor  on the right-hand side  is bounded due to 
\eqref{est-of-cofPi-grad-theta}. We now use the Gagliardo-Nirenberg inequality 
\begin{equation}\|v\|_{L^{(1+\omega)s/(2-s)}({\Omega})}
\le C_{_{\rm GN}}\|v\|_{L^1({\Omega})}^{1-\lambda}
\|v\|_{W^{1,s}({\Omega})}^{\lambda}\label{gn}
\end{equation}
with $\|v\|_{W^{1,s}({\Omega})}:=
\|v\|_{L^1({\Omega})}+\|\nabla v\|_{L^s({\Omega})^d}$. The latter
inequality holds for  all
$\lambda \in (0,1)$ such that 
\begin{equation}
\frac{2{-}s}{(1{+}\omega)s} \geq \lambda \left(\frac1s - \frac1d
\right) +1 - \lambda\label{la}
\end{equation}
and, correspondingly, for some $C_{_{\rm GN}}>0$ depending on
$s,\,\lambda$, and $\Omega$. We shall apply inequality \eqref{gn}
along with the  choices
$v=1+\vartheta_\eps(t,\cdot)$ and   $\lambda = (2{-}s)/(1{+}\omega)$. Note
that $\lambda \in (0,1)$ as $s\in [1,(d{+}2)/(d{+}1))\subset
[1,2)$. Moreover, by letting $\omega >0 $ small enough one has that condition
\eqref{la} holds, again by virtue of $s< (d{+}2)/(d{+}1)$. Hence, inequality 
\eqref{gn} gives 
\begin{align}\nonumber
&
\Bigg(
\int_0^T\!\!\big\|1+\vartheta_\eps(t,\cdot)\big\|^{(1+\omega)s/(2-s)}
_{L^{(1+\omega)s/(2-s)}({\Omega})}\,\d t
\Bigg)^{1-s/2}
\\&\nonumber\
\le
\Bigg(
\int_0^T\!\!
C_{_{\rm GN}}^{(1+\omega)s/(2-s)}C_0^{(1-\lambda)(1+\omega)s/(2-s)}
\Big(C_0{+}
\big\|\nabla\vartheta_\eps(t,\cdot)\big\|_{L^s({\Omega})^d}\Big)^{\lambda(1+\omega)s/(2-s)}
\,\d t\Bigg)^{1-s/2}
\\
  & \
\UUU \leq \EEE C_\delta + \delta 
\int_Q
\big|\nabla\vartheta_\eps
\big|
^s
\,\d x\, \d t 
\label{8-***}
\end{align}
 where $C_0=|\Omega| + C_1$ and $C_1$ is from
\eqref{est-theta-L1}. The constant $C_\delta$ depends on $C_0$,
$C_{\rm GN}$, and the small $\delta$, 
cf.\ e.g.\ \cite[Formula (12.20)]{Roub13NPDE},  and  we used the
fact that $$\lambda (1{+}\omega)s/(2{-}s) (1{-}s/2) = s(1{-}s/2)< s.$$ 
 We combine this estimate with the bound 
\begin{align}\nonumber
\big\|\nabla\vartheta_{\eps}\big\|_{L^s(Q)^d}^{ s}
&=\Big\|
\frac{\PP_{\eps}^{\top}
(\Cof\PP_{\eps})}{{\det\PP_\eps}}\nabla\vartheta_{\eps}\Big\|_{L^s(Q)^d}^{ s}
\\&
\le\Big\|
\frac{\PP_{\eps}}{\sqrt{\det\PP_\eps}}\Big\|_{L^\infty(Q)^{d\times d}}^{ s}
\Big\|
\frac{\Cof\PP_\eps}{\sqrt{\det\PP_\eps}}\nabla\vartheta_{\eps}
\Big\|_{L^s(Q)^{d\times d}}^{ s}\;.
\label{est-heat++}
\end{align}
Note that the last term in the right-hand side is what occurs in the 
left-hand side of \eqref{est-of-cofPi-grad-theta}. By choosing $\delta$ small
enough, we deduce from \eqref{est-of-cofPi-grad-theta}, \eqref{8-***},
and \eqref{est-heat++} that 
\begin{align}
\Big\|
\frac{\Cof\PP_\eps}{\sqrt{\det\PP_\eps}}\nabla\vartheta_{\eps}
\Big\|_{L^s(Q)^{d\times
    d}}\le  \tilde C_s 
\label{est-heat+}
\end{align}
for any $1\le s<(d+2)/(d+1)$  and some positive $\tilde C_s$. 
From  the latter bound we directly deduce estimate
\eqref{est-theta} 
by using again \eqref{est-heat++}.
\end{proof}

\begin{proposition}[Convergence of the regularization for $\eps\to0$]\label{prop-conv2}
 Under assumptions \eqref{ass}, as  $\eps\to0$ there 
exists a    subsequence of
$\{(y_{\eps},\PP_{\eps},\vartheta_{\eps})\}_{\eps>0}^{}$
(not relabelled)
which converges weakly* in the topologies indicated in 
{\rm(\ref{est}a-f)}, \eqref{est++}, and 
\eqref{est+} to some $(y,\PP,\vartheta)$.
Every such a limit triple is a weak solution to the original problem 
in the sense of Definition~{\rm \ref{def}}. Moreover, the following 
strong convergences hold
\begin{subequations}\label{strong-conv+}
\begin{align}&&&\label{strong-conv-DT-Pi+}
\DT\PP_{\eps}^{}\PP_{\eps}^{-1}\to\DT\PP\PP^{-1}&&\text{strongly in }\ L^2(Q)^{d\times d},
\\&&&\label{strong-conv-Pi-e}
\nabla\PP_{\eps}\to\nabla\PP&&\text{strongly in }\ L^q(Q)^{d\times d\times d}.&&&&
\end{align}\end{subequations}
 Eventually,  the regularity \eqref{weak-form-Delta-Pi} and 
the energy conservation \eqref{energy-conserv++} hold.
\end{proposition}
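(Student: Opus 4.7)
The plan is to combine the $\eps$-uniform bounds \eqref{est}--\eqref{est++} and \eqref{est+} with the Banach--Alaoglu principle to extract a weak-$*$ convergent subsequence, and then to upgrade weak to strong convergence where necessary in order to pass to the limit in each of the three equations. As a first step, Aubin--Lions compactness yields $y_\eps\to y$ strongly in $C(I;W^{1,p}(\Omega)^d)$ for some $p>d$ (from $L^\infty(I;H^2)\cap H^1(I;L^2)$) and $\PP_\eps\to \PP$ strongly in $C(\overline Q;\R^{d\times d})$ (from $L^\infty(I;W^{1,q})\cap H^1(I;L^2)$ with $q>d$); moreover, the same Healey--Kr\"omer-type argument as in Proposition~\ref{prop-est} provides an $\eps$-uniform lower bound on $\det \PP_\eps$, so that $\PP_\eps^{-1}\to\PP^{-1}$ uniformly on $\overline Q$. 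For temperature, the estimates \eqref{est-theta-L1}--\eqref{est-theta} together with a standard comparison estimate on $\DT\vartheta_\eps$ in a negative Sobolev norm deliver weak convergence of $\vartheta_\eps$ in $L^s(I;W^{1,s}(\Omega))$ for $s<(d{+}2)/(d{+}1)$ and strong convergence in $L^1(Q)$; in particular the nonnegativity \eqref{positivity} passes to the limit.

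Second, I would establish the strong convergences \eqref{strong-conv+}. For \eqref{strong-conv-Pi-e}, the monotonicity computation \eqref{large-plast-strong-conv} is now repeated at the $\eps$-level with test function $\PP_\eps-\PP$; a crucial simplification with respect to the $h$-level argument is that the plastic $q$-Laplacian is now genuinely bounded in $L^2(Q)^{d\times d}$ by \eqref{est++}, obviating the Hahn--Banach extension. An analogous computation with test $\DT\PP_\eps-\DT\PP$ and the $\eps$-uniform strong monotonicity \eqref{ass-R-reg-monotone} of $\partial_\PR\mathfrak R_\eps$ produces \eqref{strong-conv-DT-Pi+}. From the strong convergence of the $q$-Laplacian terms one also infers the regularity \eqref{weak-form-Delta-Pi} for the limit.

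Third, I would perform the limit passage in the three equations. In the momentum equation this is direct, as the highest-order term is linear and the lower-order nonlinearities (in particular $g(y_\eps)$ and $\FE'(\nabla y_\eps\PP_\eps^{-1})\PP_\eps^{-\top}$) are controlled through the continuous embeddings of $H^2$ into $W^{1,p}$ for $p>d$ and the growth bound \eqref{ass-FM}. In the flow rule, the $q$-Laplacian part passes to the limit by \eqref{strong-conv-Pi-e}, while the dissipative part combines \eqref{strong-conv-DT-Pi+} with property \eqref{ass-R-conv} and a Minty-type argument to recover the variational inequality \eqref{weak-form-P} for the original nonsmooth $\mathfrak R$. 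The most delicate passage is in the heat-transfer equation: the effective-conductivity contribution $\mathscr K(\PP_\eps,\theta_\eps)\nabla\vartheta_\eps$ converges thanks to uniform convergence of $\Cof\PP_\eps/\det\PP_\eps$ combined with the weak $L^s$ convergence of $\nabla\vartheta_\eps$, while the heat-production rate converges because $\DT\PP_\eps\PP_\eps^{-1}\to\DT\PP\PP^{-1}$ strongly in $L^2(Q)^{d\times d}$, $\partial_\PR\mathfrak R_\eps(\theta_\eps;\cdot)\to\partial_\PR\mathfrak R(\theta;\cdot)$ in the sense of \eqref{ass-R-conv}, and the regularization factor $1/(1{+}\eps|\DT\PP_\eps\PP_\eps^{-1}|^2)$ is dominated by $1$ and tends to $1$ pointwise. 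The data $\theta_{\flat\eps}$ and $\theta_{0\eps}$ pass to $\theta_\flat$ and $\theta_0$ by monotone convergence.

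The main obstacle will be upgrading this to the exact energy conservation \eqref{energy-conserv++}: a naive weak-$*$ limit in the mechanical test only delivers an inequality, since the kinetic-plus-stored energy and the dissipation potential are merely weakly lower semicontinuous. To obtain the equality, I would derive the mechanical energy balance \emph{directly at the limit} by testing the limiting momentum equation by $\DT y$ and the limiting flow rule by $\DT \PP$ (equivalently by $\DT\PP\PP^{-1}$); these tests are legitimate in view of $\DT y\in L^2(I;H^1(\Omega)^d)$ stemming from the $H^2$-bound, and of the regularity ${\rm div}(|\nabla\PP|^{q-2}\nabla\PP)\in L^2(Q)^{d\times d}$ from \eqref{weak-form-Delta-Pi} combined with the chain rule for the $q$-Laplacian of \cite{Brez73OMMS}. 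Summing the resulting mechanical identity with the integral over $\Omega$ of the limiting heat equation and invoking \eqref{by-part-boundary} for the $y_\flat$-term then yields \eqref{energy-conserv++}. The genuinely subtle point is to justify a chain-rule identity for the nonsmooth dissipation potential $\mathfrak R$ along the trajectory $\DT\PP\PP^{-1}\in L^2(Q)^{d\times d}$; here \eqref{strong-conv-DT-Pi+}, monotonicity \eqref{ass-R-monotone}, and a Minty argument are used to ensure that the limiting subgradient selection is indeed compatible with the test by $\DT\PP\PP^{-1}$, producing the dissipation-rate term in the energy balance with equality rather than just inequality.
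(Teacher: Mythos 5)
Most of your outline tracks the paper's own argument: extraction by Banach selection, uniform convergence of $\PP_\eps$ and $\PP_\eps^{-1}$ via Aubin--Lions and the uniform bound on $1/\det\PP_\eps$ from \eqref{est-P}, the monotonicity tricks for \eqref{strong-conv-Pi-e} and \eqref{strong-conv-DT-Pi+} (with the simplification that \eqref{est++} replaces the Hahn--Banach step), the use of \eqref{ass-R-conv} together with a Vitali/Minty-type argument to handle the $\varepsilon$-dependent subdifferentials and to recover the variational inequality \eqref{weak-form-P}, and the $L^1$-theory for the heat equation. Up to the level of detail expected in a sketch, this is the same route as the paper.

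The genuine gap is in your justification of the energy conservation \eqref{energy-conserv++}. You propose to test the limiting momentum equation by $\DT y$ and claim this is legitimate because ``$\DT y\in L^2(I;H^1(\Omega)^d)$ stemming from the $H^2$-bound''. No such regularity is available: the estimates give only $y\in L^\infty(I;H^2(\Omega)^d)\cap W^{1,\infty}(I;L^2(\Omega)^d)$, and since the model contains no Kelvin--Voigt viscosity (cf.\ Remark~\ref{rem-viscosity}) there is no mechanism anywhere in the paper producing a bound on $\nabla\DT y$; the spatial $H^2$-bound on $y$ says nothing about spatial regularity of $\DT y$. This is precisely the point the paper flags after \eqref{energy-conserv}: the test of the hyperbolic momentum equation by $\DT y$ is only formal, and even the boundary power $\int_\Gamma Ny_\flat\cdot\DT y\,\d S$ is undefined without the by-parts-in-time interpretation \eqref{by-part-boundary}. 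Consequently your ``derive the mechanical balance directly at the limit'' step does not stand as written. The paper instead obtains \eqref{energy-conserv++} by exploiting the exact energy balances valid at the approximation level together with the strong convergences \eqref{strong-conv+} established beforehand (which is why those convergences are proved first), and then adds the limit heat equation tested by the constant $1$, a test that is admissible because $1$ is in duality with $\DT\vartheta$. If one insists on arguing at the limit level for the momentum part, the correct route is not a fictitious $H^1$-in-space bound on $\DT y$ but the linearity of the principal part ($\varrho\DDT y$ plus the $\kappa_0$-term), with $\FE'(\nabla y\PP^{-1})\PP^{-\top}$ and $g(y)$ treated as an $L^2$ right-hand side and the energy identity justified by mollification in time; your text does not supply this. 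A minor related point: the ``chain rule for the nonsmooth $\mathfrak R$'' you single out as the crux is not actually needed for \eqref{energy-conserv++}, since the dissipation rate cancels between the mechanical balance and the heat equation; the chain rules that must be justified are those for the stored-energy terms (the $q$-Laplacian potential via \eqref{weak-form-Delta-Pi} and \cite{Brez73OMMS}, as in \eqref{calculus-Pi+}), which you do invoke correctly.
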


\begin{proof}
Again, by the Banach selection principle, we  can extract a not
relabelled  subsequence  \UUU converging \EEE with respect to the topologies from
the estimates \rm(\ref{est}a,b) inherited for $(y_\eps,\PP_\eps)$, 
\eqref{est++}, and \eqref{est+}, and indicate its limit by  $(y,\PP,\vartheta)$.

The improved, strong convergences \eqref{strong-conv+}  can be obtained by 
arguing as in the proof of \eqref{strong-conv} in
Proposition~\ref{prop-conv1}. 

One has just to modify the argument in \eqref{strong-conv-grad-Pi} as:
\begin{align}\nonumber
&\lim_{\eps\to0}c_{d,q}^{}\|\nabla\PP_{\eps}{-}\nabla\PP\|_{L^q(Q)^{d\times d\times d}}^q
\le\lim_{\eps\to0}\int_Q\big(|\nabla\PP_{\eps}|^{q-2}\nabla\PP_{\eps}
-|\nabla\PP|^{q-2}\nabla\PP\big)
\Vdots\nabla\big(\PP_{\eps}-\PP\big)\,\d x\, \d t
\\&\nonumber\quad =\lim_{\eps\to0}\frac1{\kappa_1}\int_Q
\nabla y_{\eps}^\top\FE'(F_{{\rm el},\eps}){:}
(\PP_{\eps}^{-1})'{:}(\PP_{\eps}{-}\PP)
+\partial_\PR^{}\mathfrak{R}_\varepsilon^{}(\theta_{\eps};\DT\PP_{\eps}\PP_{\eps}^{-1})
{:}((\PP_{\eps}{-}\PP)\PP_{\eps}^{-1})\,\d x\, \d t
\\[-.2em]&\label{strong-conv-grad-Pi+}\qquad\qquad\qquad\qquad\qquad\qquad\qquad
-\int_Q|\nabla\PP|^{q-2}\nabla\PP\big)
\Vdots\nabla\big(\PP_{\eps}-\PP\big)\,\d x\, \d t=0.
\end{align}
Here we used that the sequence 
$\partial_\PR^{}\mathfrak{R}_\varepsilon^{}(\theta_{\eps};\DT\PP_{\eps}\PP_{\eps}^{-1})$ is bounded in $L^2(Q)^{d\times d}$ (without caring about its limit) 
while $(\PP_{\eps}{-}\PP)\PP_{\eps}^{-1}\to0$ strongly in $L^2(Q)^{d\times d}$ (or 
even in $L^\infty(Q)^{d\times d}$, cf.\ the arguments used already for 
\eqref{strong-conv-grad-Pi}).
Thus \eqref{strong-conv-Pi-e} is proved.

\UUU For \eqref{strong-conv-DT-Pi+}, one \EEE 
has just to modify the argument in \eqref{large-plast-strong-conv}, 
for the term $\nabla\DT\PP_{\eps}$ is not well defined. Relying on the fact that 
${\rm div}(\kappa_1|\nabla\PP_{\eps}|^{q-2}\nabla\PP_{\eps})\in L^2(Q)^{d\times d}$, 
we have 
\begin{align*}\nonumber
&\limsup_{\eps\to0}\frac12a_\mathfrak{R}^{}
\big\|\DT\PP_{\eps}\PP_{\eps}^{-1}-\DT\PP\PP^{-1}\big\|_{L^2(Q)^{d\times d}}^2
\\[-.0em]&\nonumber\quad\le\limsup_{\eps\to0}
\int_Q\big(\partial_\PR^{}\mathfrak{R}_\varepsilon^{}(\theta_{\eps};\DT\PP_{\eps}\PP_{\eps}^{-1})
-\partial_\PR^{}\mathfrak{R}_\varepsilon^{}(\theta;\DT\PP\PP^{-1})\big)
{:}\big(\DT\PP_{\eps}\PP_{\eps}^{-1}-\DT\PP\PP^{-1}\big)\,\d x\d t
\\[-.0em]&\nonumber\quad=\limsup_{\eps\to0}
\int_Q
\partial_\PR^{}\mathfrak{R}_\varepsilon^{}(\theta_{\eps};\DT\PP_{\eps}\PP_{\eps}^{-1})
{:}(\DT\PP_{\eps}{-}\DT\PP)\PP_{\eps}^{-1}\,\d x\, \d t
\\[-.3em]&\nonumber\qquad\qquad+\lim_{\eps\to0}
\int_Q
\partial_\PR^{}\mathfrak{R}_\varepsilon^{}(\theta_{\eps};\DT\PP_{\eps}\PP_{\eps}^{-1}){:}
\DT\PP(\PP_{\eps}^{-1}{-}\PP^{-1})
\,\d x\, \d t
\\[-.3em]&\nonumber\qquad\qquad\qquad
-\lim_{\eps\to0}\int_Q\partial_\PR^{}\mathfrak{R}_\varepsilon^{}(\theta_{\eps};
\DT\PP\PP^{-1}){:}(\DT\PP_{\eps}\PP_{\eps}^{-1}-\DT\PP\PP^{-1}))\,\d x\d t
\\[-.3em]&\nonumber\quad
\stackrel{ \rm (a)}{=}\lim_{\eps\to0}\int_Q\nabla y_{\eps}^\top\FE'\big(
\nabla y_{\eps} 
\PP_\eps^{-1}
\big){:}
(\PP_\eps^{-1})'
{:}
(\DT\PP_{\eps}{-}\DT\PP)+ \psi_{\rm H}' (\PP_{\eps}){:}(\DT\PP_{\eps}{-}\DT\PP)\,\d x\d t
\\[-.3em]&\nonumber\qquad\qquad
-\liminf_{\eps\to0}\int_Q{\rm div}\big(\kappa_1|\nabla\PP_{\eps}|^{q-2}\nabla\PP_{\eps}\big){:}(\DT\PP_{\eps}{-}\DT\PP)\,\d x\, \d t 
\\[-.3em]&\nonumber\qquad\qquad\qquad
 -\lim_{\eps\to0}\int_Q\partial_\PR^{}\mathfrak{R}_\varepsilon^{}(\theta_{\eps};
\DT\PP\PP^{-1}){:}(\DT\PP_{\eps}\PP_{\eps}^{-1}-\DT\PP\PP^{-1}))\,\d x\d t
\\[-.3em]&\nonumber\quad
\stackrel{ \rm (b)}{=}\lim_{\eps\to0}\int_Q\nabla y_{\eps}^\top\FE'\big(
\nabla y_{\eps} 
\PP_\eps^{-1}
\big){:}
(\PP_\eps^{-1})'{:}(\DT\PP_{\eps}{-}\DT\PP)
+ \psi_{\rm H}'(\PP_{\eps}){:}(\DT\PP_{\eps}{-}\DT\PP)
\\[-.3em]&\nonumber\qquad\qquad
+{\rm div}(\kappa_1|\nabla\PP_{\eps}|^{q-2}\nabla\PP_{\eps}){:}\DT\PP
\,\d x\d t
+\limsup_{\eps\to0}\int_\Omega\frac{\kappa_1}q|\nabla\PP_0|^q-\frac{\kappa_1}q|\nabla\PP_{\eps}(T)|^q\,\d x
\\[-.3em]&\quad\le
\int_\Omega\frac{\kappa_1}q|\nabla\PP_0|^q
-\frac{\kappa_1}q|\nabla\PP(T)|^q\,\d x-\int_Q
{\rm div}(\kappa_1|\nabla\PP|^{q-2}\nabla\PP){:}\DT\PP
\,\d x\d t\nonumber
=0.
\end{align*}
In addition to the arguments analogous to 
\eqref{calculus-Pi}-\eqref{calculus-Pi+},  for equality (a) we
have used the fact that  
\begin{align}\nonumber
&\bigg|\int_Q
\partial_\PR^{}\mathfrak{R}_\varepsilon^{}(\theta_{\eps};\DT\PP_{\eps}\PP_{\eps}^{-1}){:}
\DT\PP(\PP_{\eps}^{-1}{-}\PP^{-1})
\,\d x\, \d t\bigg|
\\&\qquad
\le\|\partial_\PR^{}\mathfrak{R}_\varepsilon^{}(\theta_{\eps};\DT\PP_{\eps}\PP_{\eps}^{-1})\|_{L^2(Q)^{d\times d}}^{}\|\DT\PP\|_{L^2(Q)^{d\times d}}^{}
\|\PP_{\eps}^{-1}{-}\PP^{-1}\|_{L^\infty(Q)^{d\times d}}^{}\to0.
\label{conv---}\end{align}
 This follows as  $\PP_{\eps}^{-1}\DELETE{=\Cof\PP_{\eps}^{\top}}\to\DELETE{\Cof\PP^{\top}=}\PP^{-1}$ strongly 
in $L^\infty(Q)^{d\times d}$ due to our estimates 
\eqref{est-P} inherited for $\PP_\varepsilon$, as used already for 
\eqref{strong-conv-grad-Pi}.  Moreover, for equality (b) we also used that 
\begin{align}\nonumber
\lim_{\eps\to0}
&\int_Q\partial_\PR^{}\mathfrak{R}_\varepsilon^{}\big(\theta_{\eps};
\DT\PP\PP^{-1}\big){:}(\DT\PP_{\eps}\PP_{\eps}^{-1}-\DT\PP\PP^{-1})\,\d
x\d t  
\\&\nonumber\quad
=\lim_{\eps\to0}\int_Q\partial_\PR^{}\mathfrak{R}_{1,\eps}^{}\big(\theta_{\eps};
\DT\PP\PP^{-1}\big){:}(\DT\PP_{\eps}\PP_{\eps}^{-1}-\DT\PP\PP^{-1})\,\d x\, \d t
\\\label{conv--}&\quad\qquad+
\lim_{\eps\to0}\int_Q\partial_\PR^{}\mathfrak{R}_2^{}\big(\theta_{\eps};
\DT\PP\PP^{-1}\big){:}(\DT\PP_{\eps}\PP_{\eps}^{-1}-\DT\PP\PP^{-1})\,\d x\, \d t
=0.
\end{align} 
The latter follows from  (\ref{ass-R-eps}d,e)
 and the fact that  $\theta_\varepsilon\to\theta$
strongly in $L^1(Q)$ hence 
$\sigma_{_{\rm Y}}\!(\theta_\varepsilon)\to\sigma_{_{\rm Y}}\!(\theta)$ a.e.  Indeed, 
we have that 
$\partial_\PR^{}\mathfrak{R}_{1,\varepsilon}^{}(\theta_{\eps};\DT\PP\PP^{-1})$ converges
a.e.\ on $Q$ either to $\partial_\PR^{}\mathfrak{R}(\theta;\DT\PP\PP^{-1})$
if $\DT\PP\PP^{-1}(t,x)\ne0$ or to $0$ otherwise. 
 As  the sequence 
$ \partial_\PR^{}\mathfrak{R}_{1,\eps}^{}(\theta_{\eps};\DT\PP\PP^{-1})
$
 is  bounded in $L^\infty(Q)^{d\times d}$,  the Vitali
theorem ensures that  it converges strongly in  $L^r(Q)^{d\times
  d}$ for all
$r<\infty$.    On the other hand,  
$\partial_\PR^{}\mathfrak{R}_2^{}(\theta_{\eps};\DT\PP\PP^{-1})\to
\partial_\PR^{}\mathfrak{R}_2^{}(\theta;\DT\PP\PP^{-1})$ strongly in 
$L^2(Q)^{d\times d}$ just by the usual continuity of the underlying 
Nemytski\u\i\ mapping.  Since we have  the weak convergence 
$\DT\PP_{\eps}\PP_{\eps}^{-1}\to\DT\PP\PP^{-1}$ in $L^2(Q)^{d\times d}$,
 convergence  \eqref{conv--} follows.


The passage to the limit 
then follows
similarly as in the proof of Proposition \ref{prop-conv1}. 
A little difference concerns the strong convergence of $\vartheta_\eps$,
which follows again by the Aubin-Lions Theorem but we use here a coarser 
topology than in Proposition~\ref{prop-conv1}. \UUU Namely, the
convergence holds in  \EEE$L^p(Q)$ with 
arbitrary $1\le p<1+2/d$,
related to the estimates \eqref{est+} when interpolated. This change is however 
immaterial with respect to the limit passage in the mechanical
part (\ref{system}a,b). Actually, some arguments are even 
simplified, for we do not need to approximate the limit into the 
finite-dimensional 
subspaces as we  did  in 
\eqref{large-plast-strong-conv}.
%
%
The heat-production rate on the right-hand side
of \eqref{system-heat+} converges now strongly in $L^1(Q)$.

 Eventually, regularity \eqref{weak-form-Delta-Pi} can be obtained from the estimates 
\eqref{est++}, which are uniform in $\eps>0$.
%
%
The energy conservation \eqref{energy-conserv++} follows  directly
 from the 
energy conservation in the mechanical part, as essentially used 
above while checking  the strong convergences  \eqref{strong-conv+}. 
Indeed, one integrates \eqref{energy-conserv+} over $[0,t]$ and
\UUU sums \EEE it to the heat equation tested on the constant 1. Note that this
is amenable as the constant $1$ can be put in  
duality with $\DT\vartheta$, so that the chain-rule applies.  
\end{proof}

\begin{remark}[{\sl Boundary conditions on $\PP$}]
\label{lastremark}
\upshape
We have assumed here the homogeneous Dirichlet condition $\PP={\mathbb I}$ on 
$\Sigma$ for the sake of simplicity. One has however to mention that
 other boundary conditions could be considered. \UUU In particular,
 this could be done \EEE
if the hardening $\FH$ \UUU were \EEE coercive on the whole plastic tensor $\PP$, otherwise only  at the expense of some additional 
intricacies. Indeed, a bound on $\PP_{\eps h}$ could be obtained from that on 
$\DT\PP_{\eps h}^{_{}}\PP_{\eps h}^{-1}$ by suitably exploiting the coercivity of 
the elastic energy. This would however require to strengthen the corresponding growth assumptions. 
\end{remark}

\section*{\UUU Acknowledgements} The authors acknowledge the hospitality and 
the support of the Erwin Schr\"odinger Institute of the University of Vienna, 
where most of this research has been performed, \UUU as well as the partial 
support from Austrian-Czech projects {16-34894L} (FWF/CSF) and 
7AMB16AT015 (FWF/MSMT CR). \EEE T.R acknowledges also the support of 
Czech Science Foundation (CSF) project 16-03823S 
and 17-04301S 
as well as through the institutional support RVO:\,61388998 (\v
CR). 
U.S.\ \UUU is partially supported by \EEE the Austrian Science Fund (FWF)
 projects  F\,65,  P\,27052, and I\,2375 and \UUU by \EEE the Vienna Science and 
Technology Fund (WWTF) project  MA14-009.
\UUU The authors are gratefully indebted to the anaonymous referee for the 
careful reading of the manuscript and many valuable suggestions. \EEE

\bibliographystyle{plain}

\end{document}